\documentclass[journal,twoside,web]{ieeecolor}

\usepackage{amsmath}
\usepackage{amssymb}
\usepackage{mathtools}
\usepackage{enumerate}
\usepackage{graphicx}
\usepackage{textcomp}

\usepackage{algorithm}
\usepackage{algorithmicx}
\usepackage{cite}
\usepackage{mathrsfs}
\usepackage{lcsys}

\newtheorem{Theorem}{Theorem}[section]

\newtheorem{Lemma}[Theorem]{Lemma}
\newtheorem{Corollary}[Theorem]{Corollary}

\newtheorem{Remark}[Theorem]{Remark}

\usepackage{subcaption}
\usepackage{float}
\usepackage{bm}
\usepackage{hyperref}

\begin{document}

\def\BibTeX{{\rm B\kern-.05em{\sc i\kern-.025em b}\kern-.08em
    T\kern-.1667em\lower.7ex\hbox{E}\kern-.125emX}}
\markboth{\journalname, VOL. XX, NO. XX, XXXX 2017}
{Author \MakeLowercase{M. Petreczky \textit{et al.}}}

\title{Stability of input-output maps and their minimal realizations  in  state-linear, state-affine, LPV, and linear switched systems}

\author{Mihály Petreczky, Juan-Pablo Ortega,
Florian Rossmannek and 
Bálint Daróczy
\thanks{M. Petreczky is with Univ. Lille, CNRS,
 Centrale Lille, UMR 9189 CRIStAL, F-59000 Lille, France
  (mihaly.petreczky@centralelille.fr).}
\thanks{J.-P. Ortega and 
F. \textcolor{black}{Rossmannek} are with SPMS, NTU, Singapore
  (e-mail: juan-{pablo.ortega,florian.\textcolor{black}{rossmannek}}@ntu.edu.sg).}  
\thanks{B. Daróczy is with
HUN-REN SZTAKI, Budapest, Hungary (e-mail: daroczy.balint@sztaki.hun-ren.hu).}}


\maketitle 
\thispagestyle{empty}

\begin{abstract}
      Stability is often assumed in learning and identification, yet it is rarely characterized directly from input--output data. We show that an input--output family admits a stable finite-dimensional state-linear realization iff it has finite Hankel-rank and its response decays uniformly with time; for state-linear realizable maps this decay is necessarily exponential. We extend these results to state-affine, LPV, and linear switched systems via suitable input-forgetting notions, and relate forgetting to decay of impulse responses (sub-Markov parameters). In all cases, the decay/forgetting rate determines the decay rate of every minimal realization.
\end{abstract}
\begin{IEEEkeywords}
Stability, input forgetting, minimal realization, state-affine systems, LPV systems. 
\end{IEEEkeywords}
\section{Introduction}
In this paper we are interested in characterizing input-output maps
\textcolor{black}{of stable \textcolor{black}{state-linear} systems (SLS)} 
\begin{equation}
    \label{eq1} 
       \begin{split}
         &  x(t+1)=A_{u(t)}x(t), \quad x(0) \in B, \quad  y(t)=C x(t)
      \end{split}
\end{equation}
where $x(t) \in \mathbb{R}^n$ is the state, $y(t) \in \mathbb{R}^{\textcolor{black}{n_y}}$ is the output and $u(t) \in \mathbb{U}$ is the 
\textcolor{black}{input at time $t$}, $A_{u(t)}$ and $C$ are matrices of suitable sizes,
   and $B=\{B_j \in \mathbb{R}^n\}_{j \in J}$
   is a family of initial states indexed by a set $J$.
  \textcolor{black}{SLSs are also known as switched linear systems (with no additive input) when $\mathbb{U}$ is finite or $\{A_u\}_{u \in \mathbb{U}}$ is compact. We prefer the term SLS as it is older \cite{Son:Real} and it is not associated with 
  assumptions on $\mathbb{U}$.}
  %

  \begin{color}{black}
   Stability of \eqref{eq1} is well-understood, without claiming completeness \cite{Chitour1,RJungers,IanMorris2022,Protasov2022,mason2023}. In particular, it is known that the asymptotic stability of \eqref{eq1} is equivalent to uniform \emph{global exponential stability (GUES)} and both are equivalent to the joint spectral radius of $\{A_u\}_{u \in \mathbb{U}}$ being less than one. 
   However, it is less clear how stability properties of the input-output behavior of \eqref{eq1} relate to the stability of \eqref{eq1}, e.g., it is unclear if
   \eqref{eq1} is GUES if its input-output maps decay.
   
  Our \emph{first contribution} is to show that a family of input-output maps has a GUES SLS realization if and only if it has an SLS realization and the values of the input-output maps decay to zero; in that case, decay is necessarily
  exponential, and all minimal realizations are GUES with this decay rate.

  This is relevant for two reasons. 
  First, state-space representations (SSRs) obtained through identification or model reduction capture, at best, only the true input-output behavior, 
  and therefore cannot be identified with the true system itself. 
  Hence, in the absence of prior knowledge, any SSR reproducing the true input-output behavior is an equally valid candidate model.
  Therefore, analysis and control design for one such SSR should apply to 
  other SSRs with the same input-output map, e.g.,
  a controller that stabilizes one SSR (and thus its input-output behavior) should stabilize other SSRs with the same input-output behavior. 
  Our results suggest that this might be the case if the other SSRs are minimal.
  Second, in learning/system identification, stability is often imposed on SSRs to guarantee robust long-term prediction and prevent error accumulation. However, this restriction is only meaningful if the true input-output behavior admits a stable realization. Our results provide conditions for this.

  As a \emph{second contribution} we apply 
  the results for SLSs 
  to \emph{state-affine systems (SAS) \cite{Son:Real}, linear parameter-varying systems with affine dependence on the scheduling variable (LPV-SSA) \cite{Petreczky2016,Toth1}, and linear switched systems (LSS) \cite{Sun:Book}}.
  Realizability by a GUES \emph{SAS }is equivalent to an appropriate input-forgetting property, where a SAS is called GUES if its linearization is GUES. 
  Realizability by a GUES LPV-SSA/LSS, i.e., one for which the SLS obtained by taking zero additive input is GUES, is equivalent to the impulse response (sub-Markov parameters) decaying to zero, and equivalently, to $\ell_\infty$-forgetting (input forgetting with BIBO stability).
%
   In all cases, the decay/forgetting rate is necessarily exponential and it coincides with the decay rate of minimal realizations, which are necessarily GUES. 
  The relevance of these results for SASs/LPV-SSAs/LSSs mirrors that for SLSs. For SASs, they
  are also relevant for reservoir computing \cite{Ortega4,Ortega3,Sepulchre1,GONON202110}, as they relate
  the \textcolor{black}{fading memory and stability/echo-state properties.}
  \end{color}
\textbf{Related work.}
  For linear systems, the relationships among internal stability, BIBO stability, and $\ell_p$-gains are classical results \cite{Cal:Des}. For nonlinear systems, similar connections are subtler and often require strong reachability/observability assumptions \cite{WillemsStabOld,FROMION1996243,van19922,HILL1980327,MoylanCounterExamples,478341}, which may not hold for structured classes like SLS/SAS/LPV-SSA/LSS.
  \begin{color}{black}
  While stability of SLS/SAS/LPV-SSA/LSS state-space representations is well-studied, e.g., \cite{Chitour1,RJungers,IanMorris2022,Protasov2022,mason2023,Toth1,Sun:Book},
  our contribution is to establish the equivalence between decay/input-forgetting of input-output maps
  and stability properties of their minimal realizations, 
  by combining realization theory with known stability results for SSRs.
  \end{color}
  In reservoir computing, 
  fading memory and stability 
  were studied for SASs \cite{Ortega4,Ortega3,Sepulchre1,GONON202110}, 
  \textcolor{black}{but not the correspondence between 
  fading-memory of
  input-output maps and the stability of their
  minimal finite-dimensional realizations. }


\section{Preliminaries}
 A \emph{nonempty word} over $X$ is a finite sequence of elements of $X$, i.e.,  
$w = x_{1}x_{2}\cdots x_{k}$ for some $k \in \mathbb{N}$, $k >0$, $x_{1}, x_{2}, \ldots, x_{k} \in X$; $|w|:=k$ is the 
length of $w$. \textcolor{black}{The set of all nonempty words is $X^{+}$. The empty word is denoted by $\epsilon$, $|\epsilon|=0$, and $X^{*} = \{\epsilon\} \cup X^{+}$. Concatenation is defined by $vw = a_1\cdots a_m b_1\cdots b_n$ for $v = a_1\cdots a_m$, $w = b_1\cdots b_n$, and $v\epsilon = \epsilon v = v$.}
We denote by $X^k$ the set of sequences of length $k$, and by $X^{\infty}$ the set of all infinite sequences of elements of $X$;
for any $w \in X^{\infty}$, $|w|=\infty$.
If $w \in X^{*} \cup X^{\infty}$, and $k \le |w|$ then $w[k]$ denotes the $k$th element of 
$w$, i.e., $w=w[1]\cdots w[|w|]$. \textcolor{black}{If $i > |w|$, then $w[i]\cdots w[|w|]$ is understood to be the empty word $\epsilon$.} 
	\textcolor{black}{If $Z$ is an $n \times m$ matrix, then $Z_{i,j}$ denotes the element on the $i$th row and $j$th column of $Z$; $Z_{i,\cdot}$ and $Z_{\cdot,j}$ denote the $i$th row and $j$th column of $Z$; if $z$ is a vector \textcolor{red}{with $n$ entries}, then $z_i$ denotes the $i$th element of $z$. 
We denote by $\|Z\|_F$ the Frobenius norm of $Z$, by $\|Z\|_2$ the spectral norm of $Z$,  \textcolor{red}{by $\|z\|_2$ the Euclidean norm of $z$, and 
by $\|z\|_{\infty}$ the supremum norm, i.e., $\|z\|_{\infty}=\max_{i,\ldots,n} |z_i|$.}  Let $I_n$ be the $n \times n$ identity matrix.}
\textcolor{black}{For a sequence $w \in (\mathbb{R}^p)^{*} \cup (\mathbb{R}^p)^{\infty}$ 
define  $\|w\|_{\infty} = \sup_{1 \le i \le |w|, i \in \mathbb{N}} \|w[i]\|_{\infty}$.}
For a  family $\{A_x\}_{x \in X}$ of $n \times n$ matrices 
and a sequence
$w  \in X^*$ define 
\[ A_{w} = A_{w[|w|]} \cdots A_{w[1]}, ~~\textcolor{black}{\mbox{ if } w \in X^{+}}, \quad A_{\epsilon}=I_n. \]


\subsection{State-space representations}
   Consider a general state-space representation (\emph{SSR})
\begin{equation}
\label{gen:eq1}
\Sigma \left \{\begin{split}
 x(t+1)&=f(x(t),u(t)), x(0) \in B \\
 y(t)& =h(x(t),u(t))
\end{split}\right. \textcolor{black}{,}
\end{equation}
where $x(t) \in \mathbb{R}^n$ is the state, $u(t) \in \mathbb{U}$ is the input, $y(t) \in \mathbb{R}^{\textcolor{black}{n_y}}$ is the output,
$f:\mathbb{R}^n \times \mathbb{U} \rightarrow \mathbb{R}^n$, 
$h:\mathbb{R}^n  \times \mathbb{U} \rightarrow \mathbb{R}^{\textcolor{black}{n_y}}$ is the \emph{state-transition} and
\emph{readout} map respectively, and  
   $B=\{B_j \in \mathbb{R}^n \}_{j \in J}$, \textcolor{black}{$J \ne \emptyset$}, is the family of \emph{initial states}. 
We identify  $\Sigma$ with the tuple $(f,B,h)$ and we denote by  $\dim(\Sigma)$ the dimension $n$ of its state-space.
%

For an initial state $x_0$,  and sequence $w \in \mathbb{U}^{*}$
denote by $x_{\Sigma}(x_0,w)$ the state of \eqref{gen:eq1} 
reached at time $t$ under input $u(0)=w[1],\ldots, u(t-1)=w[t]$, $t=|w|$ from \textcolor{red}{the} initial state $x_0$, and we set 
$x_{\Sigma}(x_0,\epsilon)=x_0$.
Define the response map $y_{\Sigma,x_0}\textcolor{black}{:\mathbb{U}^{+} \rightarrow \mathbb{R}^{n_y}}$ of $\Sigma$ induced by the
initial state $x_0$ as follows: \textcolor{black}{$y_{\Sigma,x_0}(v\sigma)=h(x_{\Sigma}(x_0,v),\sigma)$,
$v \in \mathbb{U}^{*},\sigma \in \mathbb{U}$, i.e.,
  $y_{\Sigma,x_0}(u(0)\cdots u(t))$ is the output response of
  \eqref{gen:eq1} at time $t$ to the inputs $\{u(s)\}_{s=0}^{t}$ from the initial state $x_0$.
}
We say that $\Sigma$ is \emph{span-reachable} \textcolor{black}{(respectively \emph{affine-reachable})}, 
if the linear (respectively affine) span of all the reachable states $\{x_{\Sigma}(B_j,w)\}_{w \in \mathbb{U}^{*}, B_j \in B}$
equals $\mathbb{R}^n$. We say that $\Sigma$ is \emph{observable}, if 
for any two distinct initial states $x_0 \ne \bar{x}_0$ there exists $w \in \mathbb{U}^{+}$ such that
$y_{\Sigma,x_0}(w) \ne y_{\Sigma,\bar{x}_0}(w)$. 


\emph{State-linear systems (SLSs)} are special cases of SSR \eqref{gen:eq1} 
for which $f(x,u)=A_u x$ and $h(x,u)=Cx$. In what follows, we identify $\Sigma$ with the tuple 
$\Sigma=(\{A_u\}_{u \in \mathbb{U}},B,C)$. 
   
\emph{State-affine systems (SAS) \cite{Son:Real}} are SSRs \eqref{gen:eq1} of the form
\begin{equation}
\label{eq1:sas}
\begin{split}
 x(t+1)&= A_{u(t)}x(t)+b_{u(t)}, \quad  x(0) \in B=\{x_0\}, \\
 y(t) &= Cx(t)+d,
\end{split}
\end{equation}
where $A_u \in \mathbb{R}^{n \times n}$, $b_u \in \mathbb{R}^{n}$ for $u \in \mathbb{U}$, and
$C \in \mathbb{R}^{\textcolor{black}{n_y} \times n}$, $d \in \mathbb{R}^{\textcolor{black}{n_y}}$, and the family of initial states $B$ is a singleton.
Following \cite{Son:Real} we define the subclass of bounded  SASs, for 
which realization theory is computationally effective.  To this end, 
let $0 < n_p \in \mathbb{N}$ 
and consider $n_p+1$
linearly independent  functions $\psi_{q}:\mathbb{U} \rightarrow \mathbb{R}$, 
$q=0,\ldots,n_p$. 
Then  SSR of \eqref{eq1:sas} is \emph{$\{\psi_q\}_{q=0}^{n_p}$-bounded SAS}, if 
$A_ux+b_u\!\!=\!\!\sum_{i=0}^{n_p} (A_ix+b_i) \psi_i(u)$ and  $h(x,u)\!\!=\!\!Cx+d$
for \textcolor{black}{some} matrices and vectors 
$\{A_q,b_q\}_{q=0}^{n_p}$.
\emph{Bilinear systems \cite{isi:tac}} are bounded SASs 
\textcolor{black}{with $\mathbb{U}=\mathbb{R}^{m}$, $\psi_0=1$, $\psi_q(u)=u_q$ for $q \in \{1,\ldots,m\}$, $m=n_p$, $b_0=0$, $d_0=0$.}

%
 
 A \emph{linear parameter-varying (LPV) system with affine dependence on the parameters (LPV-SSA) \cite{Toth1}} is an SSR
 with inputs $u(t)=(p(t),v(t)) \in \mathbb{U}=\mathbb{P} \times \mathbb{R}^m$ of the form
\begin{equation}
\label{eq1:lpv}
\begin{split}
x(t+1) & = A(p(t)) x(t) + B(p(t)) v(t), x(0)=0\\
y(t) &= C(p(t))x(t) \textcolor{black}{,}
\end{split}
\end{equation}
where $p$ is called the \emph{scheduling signal}, $v$ is the control input, and 
the matrices $A(p(t))$, $B(p(t))$, and $C(p(t))$ are affine in $p(t)$, i.e.
\(
   X(p(t)) = X_0 + \sum_{i=1}^{n_p} X_i p_i(t)
\)
with $X = A, B, C$ and $\{A_i, B_i, C_i\}_{i=1}^{n_p}$ being suitable matrices,
\textcolor{black}{and the family of initial states is the singleton $\{0\}$}.
We assume that $\mathbb{P} \subseteq \mathbb{R}^{n_p}$ is contained in the standard simplex $\Delta = \{x = (x_1, \ldots, x_{n_p})^T \in [0,1]^{n_p} \mid \sum_i x_i = 1\}$
and $\mathbb{P}$ contains the standard unit vectors $e_1, \ldots, e_{n_p}$.
We also assume $A_0 = 0$, $B_0 = 0$, $C_0 = 0$, otherwise we can replace $A_i, B_i, C_i$ by $A_i + A_0, B_i + B_0, C_i + C_0$ for $i = 1, \ldots, n_p$.
A \emph{linear switched system (LSS) \cite{Sun:Book,D:Lib}} is a special case of LPV-SSA where $\mathbb{P} = \{e_1, \ldots, e_{n_p}\}$,
 \textcolor{black}{and  $X(e_i) = X_i$ for $X = A, B, C$ and $i = 1, \ldots, n_p$, and the scheduling signal $p$ is identified with the
  \emph{switching signal} $q$, where $p(t) = e_{q(t)}$ for all $t \in \mathbb{N}$.  }
Assume that both \textcolor{black}{$\widetilde{\Sigma}=(\widetilde{f},\widetilde{B},\widetilde{h})$} and $\Sigma$ belong to one of the classes: SLS, SASs,LPV-SSA, and LSS. Then $\widetilde{\Sigma}$ and $\Sigma$ \textcolor{black}{are called} \emph{isomorphic} if 
\textcolor{black}{$n\!\!=\!\!\dim(\Sigma)\!\!=\!\!\dim(\widetilde{\Sigma})$} and there exists an invertible map $T$ on $\mathbb{R}^n$ 
which is linear for SLSs, LSSs, LPV-SSAs,  and affine for SASs, such that
            \begin{equation*}
                 \begin{array}{rcl}
                 \label{repr:morph:eq2}
                 &Tf(x,u)=\widetilde{f}(Tx,u), ~ TB_j=\widetilde{B}_j,  ~h(x,u)=\widetilde{h}(Tx,u) \textcolor{black}{,}
             \end{array}
            \end{equation*}
            \textcolor{black}{for all $x \in \mathbb{R}^n$, $u \in \mathbb{U}$ and $j \in J$.}
            
If the readout map $h$ in \eqref{gen:eq1} does not depend on $u$, then we call $\Sigma$ a \emph{state-output system (SO)}. In this case, the output at time $t$ depends only on the inputs up to $t-1$.
We call functions $H:\mathbb{U}^{+}\to\mathbb{R}^{\textcolor{black}{n_y}}$ \emph{response maps}, and functions $H:\mathbb{U}^{*}\to\mathbb{R}^{\textcolor{black}{n_y}}$ \emph{SO response maps}. Intuitively, $H(u_0\cdots u_t)$ is the output at time $t$ (resp.\ at $t+1$ if $H$ is SO) generated by inputs $u_0,\ldots,u_t$, and for SO maps $H(\epsilon)$ is the output at time $0$. 
We model the input--output behavior of (SO) SSRs as the following families 
of (SO) response maps: 
\begin{align}
    \Psi=\{H_j:\mathbb{U}^{+}\rightarrow \mathbb{R}^{\textcolor{black}{n_y}}\}_{j \in J} & \mbox{ if general SSR} 
    \label{family:resp:gen}
    \\
    \Psi=\{ H_j:\mathbb{U}^{*}\rightarrow \mathbb{R}^{\textcolor{black}{n_y}}\}_{j \in J} & \mbox{ if SO SSR}  \textcolor{black}{.}
    \label{family:resp}
\end{align}
The SSR $\Sigma$ is called a \emph{realization} of $\Psi$, if 
\[
    H_j(w)=\left\{\begin{array}{rl}
             y_{\Sigma,B_j}(w) & \mbox{if $\Psi$ is from \eqref{family:resp:gen} and $w\in\mathbb{U}^{+}$} \\
	     h(x_{\Sigma}(B_j,w)) & \mbox{if $\Psi$ is from \eqref{family:resp} and $w \in \mathbb{U}^{*}$} \\
	                           & \textcolor{red}{\mbox{ and $\Sigma$ is SO}}
             \end{array}\right. 
\]
for all $j \in J$. 
If $J$ is a singleton, we identify $\Psi=\{H\}$ with $H$ \textcolor{black}{and call} any SSR realization of $\Psi$  a realization of $H$.
Let $\mathscr{C}$ be a subclass of SSRs (e.g., SLS, SAS, LSS, LPV-SSA).
We call $\Sigma\in\mathscr{C}$ a \emph{minimal $\mathscr{C}$-realization} of $\Psi$ if \textcolor{black}{its dimension $\dim(\Sigma)$} is minimal 
among all $\mathscr{C}$-realizations of $\Psi$.


%



\subsection{Realization theory of state-linear systems}
\label{sect:real}
  Since the readout map of a SLS does not depend on the last input, we model the input-output behavior of SLSs as families of SO response maps
  \eqref{family:resp}. Then SLS \eqref{eq1} realizes $\Psi$, if and only if 
  \(
  H_j(w)=CA_wB_j,
  \), for all
  $w\in\mathbb{U}^*,~ j\in J$.
  We recall the necessary and 
  sufficient conditions for $\Psi$ being realizable by a SLS.
  To this end, define the shift of a response map $H:\mathbb{U}^{*} \rightarrow \mathbb{R}^{\textcolor{black}{n_y}}$ by sequence $v \in \mathbb{U}^{*}$ as follows:
  \[ v \circ H: \mathbb{U}^{*} \ni w \mapsto H(vw). \]
 We define the \emph{Hankel-space} $W_\Psi$ as the linear space generated by the shifted versions of elements of $\Psi$
 \[ 
     W_{\Psi}=\mathrm{Span} \{v \circ H_j \mid j \in J, v \in \mathbb{U}^{*} \} \textcolor{black}{,}
 \]
 and we call $\dim W_{\Psi}$ the \emph{Hankel-rank}. 
 \textcolor{black}{Here, for response maps addition and multiplication by a scalar
 is understood point-wise: for two SO response maps $H_1,H_2$, for scalars $\alpha,\beta \in \mathbb{R}$, $(\alpha H_1+\beta H_2)(w)=\alpha H_1(w)+\beta H_2(w)$, $w \in \mathbb{U}^{*}$.}
 \textcolor{black}{If $\mathbb{U}$ and $J=\{j_1,\ldots,j_d\}$ 
 are both
 finite, then the}  SO response map $H_j$ corresponds to formal power series,
 and $W_{\Psi}$ is isomorphic to the column space of the Hankel-matrix
 \cite{MFliessHank,Reut:Book,Son:Real}
 of
 the formal power series $S:\mathbb{U}^{*} \ni v \mapsto \begin{bmatrix} H_{j_1}(v) & \ldots&  H_{j_d}(v) \end{bmatrix}$.
\begin{Theorem}[Theorem 2.15 \cite{Son:Real}]
\label{theo:real1}
      The family $\Psi$   
      has a realization by a SLS, if
      and only if $\dim W_{\Psi}=n <+\infty$.
      All minimal realizations of $\Psi$ are of dimension $n=\dim W_{\Psi}$, and 
      they are isomorphic.
      A SLS $\Sigma$ is a minimal realization of $\Psi$, if and only if 
      $\Sigma$ is span-reachable and observable. 
\end{Theorem}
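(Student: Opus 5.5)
The plan follows the classical Fliess--Sontag construction: from a realization we bound the Hankel space, and from a finite-dimensional Hankel space we build a realization on $W_\Psi$ itself.

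\emph{Necessity and the dimension bound.} Let $\Sigma=(\{A_u\},B,C)$ be an $n$-dimensional SLS realization of $\Psi$. For every $v$ and $j$, $(v\circ H_j)(w)=H_j(vw)=CA_wA_vB_j$. Hence the linear map
\[
\mathcal{O}:\mathbb{R}^n\to\{\text{SO response maps}\},\qquad \mathcal{O}(x)(w)=CA_wx,
\]
sends $A_vB_j$ to $v\circ H_j$. Therefore $W_\Psi=\mathcal{O}(R)$, where $R$ is the span of the reachable states $\{A_vB_j\}$. This gives $\dim W_\Psi\le\dim R\le n$, so finiteness is necessary and every realization has dimension at least $\dim W_\Psi$. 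Moreover, $\dim W_\Psi=n$ forces $R=\mathbb{R}^n$ (span-reachability) and $\ker\mathcal{O}=0$. For an SLS, injectivity of $\mathcal{O}$ is exactly observability, since $y_{\Sigma,x_0}$ is linear in $x_0$.

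\emph{Sufficiency.} Assume $\dim W_\Psi=n<\infty$ and work directly on $W_\Psi$:
\begin{itemize}
\item the dynamics are $A_u(F)=u\circ F$;
\item the initial states are $B_j=H_j$;
\item the readout is $C(F)=F(\epsilon)$.
\end{itemize}
Each shift $u\circ$ is linear and maps $W_\Psi$ into itself, because $u\circ(v\circ H_j)=(uv)\circ H_j$. Since $A_w$ is the product $A_{w[|w|]}\cdots A_{w[1]}$, we get $A_wF=w\circ F$: the first letter is applied first, so it ends up leftmost in the shifted word. Hence $CA_wB_j=(w\circ H_j)(\epsilon)=H_j(w)$. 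Choosing a basis of $W_\Psi$ turns this into matrices, giving an $n$-dimensional SLS realization. Combined with the bound above, it is minimal and minimal dimension equals $n$.

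\emph{Characterization of minimality.} If $\Sigma$ is minimal, it has dimension $n=\dim W_\Psi$, so by the first step it is span-reachable and observable. Conversely, if $\Sigma$ is span-reachable and observable, then $\mathcal{O}$ is injective and $R=\mathbb{R}^n$. Thus $\mathcal{O}$ is an isomorphism from $\mathbb{R}^n$ onto $W_\Psi$, so $\dim\Sigma=\dim W_\Psi$ and $\Sigma$ is minimal.

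\emph{Isomorphism of minimal realizations.} For a minimal $\Sigma$, $\mathcal{O}$ is a linear bijection $\mathbb{R}^n\to W_\Psi$ satisfying
\[
\mathcal{O}(A_ux)=u\circ\mathcal{O}(x),\qquad \mathcal{O}(B_j)=H_j,\qquad \mathcal{O}(x)(\epsilon)=Cx.
\]
So $\mathcal{O}$ is an isomorphism onto the canonical realization. Composing $\mathcal{O}_{\widetilde\Sigma}^{-1}\mathcal{O}_\Sigma$ gives the isomorphism $T$ between any two minimal realizations.

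I expect the main obstacle to be bookkeeping rather than substance: the order convention in $A_w$ must be matched with concatenation in the shift, and, when $\mathbb{U}$ or $J$ is infinite, one must check that $\ker\mathcal{O}=0$ is equivalent to observability and that $\mathcal{O}$ intertwines the dynamics. Both are immediate from linearity.
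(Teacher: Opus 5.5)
Your proof is correct and is the standard Fliess--Sontag canonical (shift-space) realization argument; the paper gives no proof of its own here but quotes the result from \cite{Son:Real}, and your observability step $\ker\mathcal{O}=\bigcap_{w\in\mathbb{U}^{*}}\ker CA_w=\{0\}$ is exactly the characterization used in the proof of Lemma~\ref{theo:real2}. One slip to fix: $u\circ(v\circ H_j)=(vu)\circ H_j$, not $(uv)\circ H_j$ (with your stated order, the identity $A_wF=w\circ F$ that you correctly derive next would come out with $w$ reversed), although invariance of $W_\Psi$ under the shifts is unaffected.
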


\textcolor{black}{It is well-known that the result above can be made computationally effective when $\mathbb{U}$ and $J$ are finite \cite{MFliessHank,Reut:Book}.}
For what follows we need the following technical result.
\begin{Lemma}
  \label{theo:real2}
  If $\Sigma$ is a minimal SLS realization of $\Psi$, then
  there exist sequences $\{v_i,w_i\}_{i=1}^{n} \subseteq \mathbb{U}^{*}$,
  and indices $\{j_i\}_{i=1}^{n} \subseteq  J$
  such that the matrices $O$ and $R$ have rank $n$:
  \begin{align*}
       R=\begin{bmatrix} A_{v_1}B_{j_1} & \ldots & A_{v_n}B_{j_n} \end{bmatrix} ,\\
      O=\begin{bmatrix} A_{w_1}^TC^T & \ldots & A_{w_n}^TC^T \end{bmatrix}^T\textcolor{black}{.}
  \end{align*}
\end{Lemma}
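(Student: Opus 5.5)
By Theorem \ref{theo:real1}, a minimal SLS realization $\Sigma=(\{A_u\}_{u\in\mathbb{U}},B,C)$ of $\Psi$ is span-reachable and observable. The plan is to turn these two properties into rank statements about finitely many vectors. For $R$, span-reachability says that the linear span of the reachable states $\{A_v B_j \mid v\in\mathbb{U}^*, j\in J\}$ equals $\mathbb{R}^n$. Any spanning set of a finite-dimensional space contains a basis. So we can pick $n$ linearly independent elements $A_{v_1}B_{j_1},\ldots,A_{v_n}B_{j_n}$ from this set, and the matrix $R$ with these columns has rank $n$. Nothing else is needed for $R$.

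For $O$, we first translate observability into a linear statement. Because the response of an SLS is linear in the initial state, $y_{\Sigma,x_0}(w)=CA_wx_0$. Two initial states $x_0\neq\bar x_0$ are therefore distinguishable exactly when $CA_w(x_0-\bar x_0)\neq 0$ for some $w$. Hence observability is equivalent to the condition $\bigcap_{w\in\mathbb{U}^*}\ker CA_w=\{0\}$.

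One point needs care here. The response map is defined on $\mathbb{U}^+$, whereas the SO realization uses $w\in\mathbb{U}^*$, including $w=\epsilon$ with $A_\epsilon=I$. Since SLSs are modelled as SO systems, we take the $\mathbb{U}^*$ version, which contains $C$ itself. Under the $\mathbb{U}^+$ convention the intersection is only smaller, since $\ker C\cap\ker CA_u\subseteq\ker CA_u$, so observability in either sense still gives $\bigcap_{w\in\mathbb{U}^*}\ker CA_w=\{0\}$. Taking orthogonal complements, this says that the row space spanned by all rows of all the matrices $CA_w$, $w\in\mathbb{U}^*$, is $\mathbb{R}^{n}$.

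Again a spanning set contains a basis, so there are $n$ linearly independent rows. Each of them has the form $(CA_{w})_{k,\cdot}$ for some word $w$ and row index $k$. If we list the corresponding words as $w_1,\ldots,w_n$, with repetitions allowed, then the stacked matrix $O=[CA_{w_1};\ldots;CA_{w_n}]$ contains these $n$ independent rows and so has rank $n$. Repetitions only add rows to $O$, and rank $n$ is also the maximum possible since $O$ has $n$ columns, so the statement holds.

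The only real subtlety is this last bookkeeping step: $O$ is built from blocks $CA_{w_i}$ rather than from single rows, so one must check that listing the words $w_i$ (possibly repeated) is enough. I do not expect any other obstacle.
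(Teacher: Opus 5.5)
Your proof is correct and follows the paper's own argument: you extract a basis from the reachable states $\{A_vB_j\}$ to get $R$, and a basis from the rows $C_{r,\cdot}A_w$ (using observability $\Leftrightarrow \bigcap_{w\in\mathbb{U}^*}\ker CA_w=\{0\}$), then stack the corresponding blocks $CA_{w_i}$ to get $O$. Your $\mathbb{U}^+$/$\mathbb{U}^*$ detour is unnecessary: in the paper's definition $y_{\Sigma,x_0}(v\sigma)=CA_vx_0$ (not $CA_{v\sigma}x_0$), so observability is exactly the $\mathbb{U}^*$ kernel condition.
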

\begin{proof}
  \begin{color}{black}
  For finite $\mathbb{U}$ the statement is well-known \cite{MFliessHank, Reut:Book,Son:Real}, the proof for the general case is the same, as sketched below. 
 Since $x_{\Sigma}(B_j,w)=A_wB_j$, 
 span-reachability of $\Sigma$ is equivalent to the set $\{A_wB_j \}_{j \in J, w \in \mathbb{U}^{*}}$ containing a basis 
 of $\mathbb{R}^n$.
 By choosing $\{v_i,j_i\}_{i=1}^{n}$ so that $\{A_{v_i}B_{j_i}\}_{i=1}^{n}$ is such a  basis, 
 the first statement follows. As $y_{\Sigma,x_0}(v\sigma)=CA_vx_0$, 
 $v \in \mathbb{U}^{*}, \sigma \in \mathbb{U}$, 
 it follows that
 $y_{\Sigma,x_1}(v\sigma)=y_{\Sigma,x_2}(v\sigma)$ if and only if $x_1-x_2 \in  \ker CA_v$.
 Hence 
 observability of $\Sigma$ is equivalent to $\bigcap_{w \in \mathbb{U}^{*}} \ker CA_w=\{0\}$, i.e., the set of rows of
 $\{CA_w\}_{w \in \mathbb{U}^{*}}$ 
 contains a basis of $\mathbb{R}^{1 \times n}$. By choosing $\{w_i\}_{i=1}^{n}$ so that $\{C_{r_i,\cdot}A_{w_i}\}_{i=1}^{n}$ is such a basis for some output indices $\{r_i\}_{i=1}^{n} \subseteq \{1,\ldots,n_y\}$,
 the second statement follows. 
  \end{color}
\end{proof}

\section{Main result}
We call the SLS  \eqref{eq1} \emph{globally uniformly  exponentially stable (GUES)
with decay rate $\beta$}, if \textcolor{black}{the set of initial states $B=\{B_j\}_{j \in J}$ is bounded} and
  there exists $C_g \ge 1$ such that 
    \begin{equation}
    \label{def:sls:gues}
    \textcolor{black}{\forall w \in \mathbb{U}^{*}: }\quad 
    \|A_w\|_2 \le C_g\beta^{|w|}. 
    \end{equation}
Note \textcolor{black}{that} \eqref{def:sls:gues} \textcolor{black}{implies that} any solution of \eqref{eq1} \textcolor{red}{satisfies} $\|x(t)\|_2 < C\beta^t \|x(0)\|_2$ \cite{RJungers}. 
    The family $\Psi$ from \eqref{family:resp} 
     is called \emph{uniformly decaying} (UD), if 
     $\sup_{\substack{w \in \textcolor{black}{\mathbb{U}^{*}}, \\ j \in J}} \|H_j(w)\|_{\textcolor{black}{2}} < +\infty$, and 
     \begin{equation}
     \label{def:ud}
     \lim_{t \rightarrow \infty} \sup_{w \in \mathbb{U}^{*}, |w|=t,  j \in J} \|H_j(w)\|_{\textcolor{black}{2}}=0 
     \end{equation}
     and $\Psi$ is called  \emph{uniformly exponentially decaying (UED) with decay rate $\beta \in (0,1)$}, 
     if there exists $C_{ed} \ge 1$, 
     such that 
     \begin{equation}
     \label{def:ued}
     \forall w \in \mathbb{U}^*: ~ \sup_{j \in J} \|H_j(w)\|_{\textcolor{black}{2}} \le C_{ed}\beta^{|w|}.
     \end{equation}
     Clearly, UED implies UD.
     Now, we relate realizability by a stable SLS to these properties.
  \begin{Theorem}
  \label{main:theo}
  For a family $\Psi$ as in \eqref{family:resp}, the following hold.
  \\
    \textbf{(A)}
    $\Psi$ can be realized by a SLS which is GUES 
    if and only if $\Psi$ is UD  and $\Psi$ has a finite Hankel-rank.
    \\
    \textbf{(B)}
    If $\Psi$ is UED with rate $\beta$, then all minimal SLS realizations of $\Psi$ are GUES with rate $\beta$.  
    \\
  \textbf{(C)}
      If $\Psi$ has a finite Hankel-rank and $\Psi$ is UD, then $\Psi$ is also UED.     
  \end{Theorem}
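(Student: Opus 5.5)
The plan is to prove \textbf{(B)} first by sandwiching $A_w$ between the rank-$n$ matrices of Lemma~\ref{theo:real2}. Then \textbf{(C)} will follow from a joint-spectral-radius argument, and \textbf{(A)} will be assembled from the two.

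\textbf{Step 1: proof of (B).} Let $\Sigma=(\{A_u\},B,C)$ be a minimal SLS realization of $\Psi$, and take $R$, $O$ from Lemma~\ref{theo:real2}. The $(i,k)$ block row/column entry of $OA_wR$ is
\[
C_{r_i,\cdot}A_{w_i}A_wA_{v_k}B_{j_k}=\bigl(H_{j_k}(v_kww_i)\bigr)_{r_i}.
\]
If $\Psi$ is UED with rate $\beta$, then $|(OA_wR)_{i,k}|\le C_{ed}\beta^{|v_k|+|w|+|w_i|}\le C_{ed}\beta^{|w|}$, since $\beta<1$. Because $O$ and $R$ are invertible $n\times n$ matrices, $A_w=O^{-1}(OA_wR)R^{-1}$. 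Hence $\|A_w\|_2\le nC_{ed}\|O^{-1}\|_2\|R^{-1}\|_2\,\beta^{|w|}$, which is \eqref{def:sls:gues} with a suitable $C_g\ge 1$.

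Definition \eqref{def:sls:gues} also requires the initial states to be bounded. I would get this from observability: the set $\{OB_j\}_{j\in J}$ consists of the values $(H_j(w_i))_{r_i}$, and these are bounded because $\Psi$ is UED with $w=w_i$. So $B_j=O^{-1}(OB_j)$ is bounded.

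\textbf{Step 2: proof of (C).} By Theorem~\ref{theo:real1}, finite Hankel-rank gives a minimal SLS realization. Repeating the sandwich argument with UD in place of UED gives two facts:
\begin{itemize}
\item $\sup_w\|A_w\|_2<\infty$, using the uniform bound on $\|H_j\|$;
\item $\sup_{|w|=t}\|A_w\|_2\to 0$ as $t\to\infty$.
\end{itemize}
The entries of $OA_wR$ have length at least $|w|$, so the second fact uses decay uniform over all words of length $\ge t$. This needs a small check: UD as defined controls only exact length $t$, but $\sup_{|w'|=s}$ tends to $0$, so the supremum over all lengths $\ge t$ also tends to $0$.

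Choose $T$ with $\sup_{|w|=T}\|A_w\|_2\le 1/2$. Write any $w$ as a concatenation of $\lfloor |w|/T\rfloor$ blocks of length $T$ plus a remainder. Submultiplicativity then gives $\|A_w\|_2\le M\,2^{-\lfloor |w|/T\rfloor}\le C\beta^{|w|}$ with $\beta=2^{-1/T}$, where $M=\sup_w\|A_w\|_2$. The bound $\|H_j(w)\|\le\|C\|\,\|A_w\|\,\|B_j\|$ then yields UED. This uses boundedness of $B$, which I would get from the UD bound exactly as in Step~1.

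\textbf{Step 3: proof of (A).} For the direction ``GUES realization $\Rightarrow$ UD and finite Hankel-rank'', Theorem~\ref{theo:real1} gives finite Hankel-rank. UD follows from $\|H_j(w)\|\le\|C\|C_g\beta^{|w|}\sup_j\|B_j\|$. For the converse direction, (C) gives UED, and then (B) shows that any minimal realization, which exists by Theorem~\ref{theo:real1}, is GUES.

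The main obstacle is Step~2. It must upgrade mere decay to exponential decay, and this hinges on transferring the output decay to uniform decay of $\|A_w\|$ via minimality. The block-concatenation trick is the key idea there.
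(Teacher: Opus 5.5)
Your proposal is correct and follows essentially the same route as the paper: sandwiching $A_w$ between the matrices $O$ and $R$ of Lemma~\ref{theo:real2} gives (B) directly under UED. Under UD the same sandwich gives boundedness and uniform decay of $\|A_w\|_2$ for a minimal realization, with boundedness of $\{B_j\}$ obtained from observability, exactly as in the paper. The only difference is the upgrade from uniform decay to exponential decay: you do it with the elementary block-concatenation and submultiplicativity argument instead of citing the joint spectral radius result. This makes that step self-contained, at the cost of a cruder rate $2^{-1/T}$ rather than any rate above the joint spectral radius, which does not matter for (C).
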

  \begin{proof}
     Let $\Sigma$ \eqref{eq1}
     be a realization of $\Psi$.
     The theorem follows from the subsequent implications.
     
    \textbf{(I) $\Sigma$ is GUES with rate $\beta$ $\implies$ \eqref{def:ued} holds.}
   Since 
   $\|H_j(w)\|_2=\|CA_wB_j\|_2
   \le \|C\|_2 \|A_w\|_2\|B_j\|_2$,
   if
   \eqref{def:sls:gues} holds, then
    with $C_{ed}=C_g \|C\|_2 \sup_{j \in J} \|B_j\|_2$ \eqref{def:ued}
    holds.
    
  \textbf{\textcolor{red}{(II)} $\Psi$ is UD $\implies$ all minimal SLSs are GUES.}
    \textcolor{red}{Let} $\Sigma$ be  a minimal SLS which realizes $\Psi$.
    Then the matrix $O$ from Lemma \ref{theo:real2} has a left inverse $O^{+}$, and 
    the matrix $R$ from Lemma \ref{theo:real2} is invertible.
    \textcolor{red}{For} any $\underline{u} \in \mathbb{U}^{*}$,
     \( CA_{w_l}A_{\underline{u}}A_{v_i}B_{j_i} \!\!=\!\! H_{j_i}(v_i\underline{u}w_l) \), 
    hence 
    \begin{equation*}
      \begin{split}
      & \|OA_{\underline{u}}R\|_2 
      \le \sum_{i,l=1}^{n} \|CA_{w_l}A_{\underline{u}}A_{v_i}B_{j_i}\|_2\!\!= \!\!
         \sum_{i,l=1}^{n}  \|H_{j_i}(v_i\underline{u}w_l)\|_2 
      \end{split}  
    \end{equation*}
    As
    $\|A_{\underline{u}}\|_2\!\!=\!\!\|O^{+}OA_{\underline{u}} \textcolor{black}{R} R^{-1}\|_2 \le  \|O^{+}\|_2 \|OA_{\underline{u}}R\|_2 \|R^{-1}\|_2$,
    \begin{equation}
    \label{main:theo:pf4}
    \begin{split}
     & \|A_{\underline{u}}\|_2
     \le \|O^{+}\|_2 \|R^{-1}\|_2 
        \sum_{i,l=1}^{n}  \|H_{j_i}(v_i\underline{u}w_l)\|_2.
    \end{split}
  \end{equation}
  If $\Psi$ is UD, then by setting $S=\sup_{w \in \mathbb{U}^{*}, j \in J} \|H_j(w)\|_2$
  it follows from \eqref{main:theo:pf4} that 
  \textcolor{black}{$\|A_{\underline{u}}\|_2 \le \|O^{+}\|_2 \|R^{-1}\|_2 n^2S$} 
  i.e., the set $\{ A_{u} \}_{u \in \mathbb{U}}$ is bounded. 
   \textcolor{black}{From} \eqref{def:ud}
   it follows that
    for every $\epsilon > 0$ there exists $N_{\epsilon}$ such that
    if $|\underline{u}| > N_{\epsilon}$, \textcolor{black}{then} 
    \[  \begin{split}
       \|H_{j_i}(v_i\underline{u}w_l)\|_2 < \frac{\epsilon}{n^2 \|O^{+}\|_2 \|R^{-1}\|_2 }
      \end{split}.
    \]
  Hence, by using \eqref{main:theo:pf4} it follows that 
  \( \|A_{\underline{u}}\|_2 \le \epsilon \)
  \textcolor{black}{if} $|\underline{u}| > N_{\epsilon}$, i.e.,
  \eqref{eq1} is globally uniformly asymptotically stable. 
  It is well-known that then \eqref{eq1} is \textcolor{black}{GUES}
  \cite[Theorem 2.15]{Sun:Book}, \cite{Piatnitsky}.
  Indeed, as $\{ A_u\}_{u \in \mathbb{U}}$ is bounded, by
  \cite[Corollary 1.1]{RJungers} 
  the joint spectral radius
  $\rho=\lim_{t \rightarrow \infty}\sup_{\underline{u} \in \mathbb{U}^t} \|A_{\underline{u}}\|_2^{1/t}$ of   \textcolor{black}{$\{A_u\}_{u \in \mathbb{U}}$ satisfies $\rho \!\!<\!\! 1$.}
  Let $\epsilon > 0$ be such that $\beta=(\rho+\epsilon)<1$.
  Then there exists $T_{\epsilon}$ such that if $|\underline{u}| > T_{\epsilon}$
  then $\|A_{\underline{u}}\|_2 < \beta^{|\underline{u}|}$.
  \textcolor{black}{For \textcolor{black}{$|\underline{u}| \le T_{\epsilon}$},
  $\|A_{\underline{u}}\|_2 \le \textcolor{black}{n^2S\|O^{+}\|_2 \|R^{-1}\|_2} \textcolor{black}{\le n^2S\|O^{+}\|_2 \|R^{-1}\|_2 \beta^{|\underline{u}|-T_{\epsilon}}}$.
  Hence, 
  \eqref{def:sls:gues} holds with 
  $C_g\!\!=\!\!\max\{1, \textcolor{black}{n^2S\|O^{+}\|_2 \|R^{-1}\|_2 \beta^{-T_{\epsilon}}}\}$.}
    \textcolor{black}{Also},
   \(
   \|OB_j\|_2 \le 
     \sum_{i=1}^{n} \|CA_{w_i}B_j\|_2=
      \sum_{i=1}^{n} \|H_j(w_i)\|_2 \le \textcolor{black}{nS}
   \)
   and hence
   \( \|B_j\|_2=\|O^{+}O B_j\|_2 \le \|O^{+}\|_2 \textcolor{black}{nS}  \).  
   
\textbf{(III) If $\Sigma$ is minimal and \eqref{def:ued} holds, then \eqref{def:sls:gues} holds with the same $\beta$}. 
  It follows from \eqref{def:ued} that
  \( \|H_{j_i}(v_i\underline{u}w_l)\|_2 <  C_{ed}\beta^{|v_i|+|\underline{u}|+|w_l|} \).
  Using \eqref{main:theo:pf4}, 
  and choosing \textcolor{black}{$C_g=\|O^{+}\|_2\|R^{-1}\|_2\max\{1,C_{ed}\sum_{l,i=1}^{n} \beta^{|v_i|+|w_l|}\}$},  it follows that
  \eqref{def:sls:gues} holds. 
  Statement \textbf{(A)} follows from \textbf{(I)}, \textbf{(II)}, and the fact that finite Hankel-rank guarantees the existence of a minimal SLS realization. Statement \textbf{(B)} follows from \textbf{(III)}.  Statement \textbf{(C)} follows from \textbf{(II)} and \textbf{(I)}.
  \end{proof}

\section{Special cases} 

In this section we apply the main results to  SASs, LPV-SSAs and LSSs, by associating any particular system of those classes with a suitable SLS following \cite{Son:Real},\cite{Petreczky2016}.

\subsection{State-affine systems}
  We consider SASs of the form \eqref{eq1:sas}. These systems are SO, and their 
  \textcolor{black}{response maps are of the form} $H:\mathbb{U}^{*} \rightarrow \mathbb{R}^{\textcolor{black}{n_y}}$.
  Following \cite{Son:Real}, we can reduce realization theory of SASs to realization theory of SLSs as follows. 
   We associate with $H$ the 
   family $\Psi_H=\Psi$ of the form
   \eqref{family:resp}, where 
   $J=\mathbb{U}$ and 
   \begin{equation}
   \label{def:hu}
     \forall w \in \mathbb{U}^{*}, u \in \mathbb{U}: \quad H_u(w)=H(uw)-H(w) \textcolor{black}{.}
   \end{equation}
    Clearly, 
    $H(w)\!=\!\textcolor{black}{H(\epsilon)+\sum_{s=1}^{|w|}} H_{w[s]}(w[s+1]\cdots w[|w|])$, \textcolor{black}{and}
    \eqref{eq1:sas} is a realization of $H$, if and only if for all 
    $v \in \mathbb{U}^{*}$
    \[ 
    H_u(v)=CA_v (A_ux_0+b_u-x_0),  ~ u \in \mathbb{U}, ~ H(\epsilon)=Cx_0+d. 
    \]
    Following \cite{Son:Real}, if
    $\Sigma$ is as in \eqref{eq1:sas}, we  define
    the SLS
    \[
    R_{\Sigma}=\bigl(\textcolor{black}{\{A_u\}_{u\in\mathbb{U}}},B=\{B_u:=A_ux_0+b_u-x_0\}_{u\in\mathbb{U}},C)\textcolor{black}{.}
    \]
    Then $\Sigma$ is a realization of $H$
    if and only if $R_{\Sigma}$ is a SLS realization of $\Psi_H$
    and $Cx_0+d=H(\epsilon)$.
    \textcolor{black}{The} transformation
    $\Sigma \mapsto R_{\Sigma}$ preserves observability and isomorphism, and maps affine-reachable SASs to span-reachable SLSs. 
    Conversely, if
    $R=(\{A_{u}\}_{u \in \mathbb{U}},B,C)$
    is a SLS realization of
    $\Psi_H$, then 
    the SAS $\Sigma_R$ of the form
    \eqref{eq1:sas} with $d=H(\epsilon)$, $x_0=0$ and 
     $b_u=B_u$ is a realization of $H$.
     The transformation $R \mapsto \Sigma_R$ preserves 
     span-reachability, observability, minimality and isomorphism.
     For $\Sigma_R$ affine- and span-reachability are equivalent, as its initial state is zero. 
%
     
     From Theorem \ref{theo:real1} it thus follows \cite{Son:Real}
     that  \textbf{(1)} $H$ has a realization by a SAS if and only 
     if $\Psi_H$ has a finite Hankel-rank, \textbf{(2)} a SAS is a minimal realization of $H$ 
     if and only if it is affine-reachable and observable, \textbf{(3)} all minimal SAS realizations of $H$ are isomorphic. 

     We  call \eqref{eq1:sas} \emph{globally uniformly exponentially stable (GUES) with decay rate $\beta$} if 
     the associated SLS is GUES. 
      This definition does not require \eqref{eq1:sas} to be stable in the classical sense as an equilibrium point may not exist. However, if $\mathbb{U}$ is finite and \eqref{eq1:sas} is GUES, then by \cite{ATHANASOPOULOS2016158,Rossa2023} it has an exponentially attractive minimal invariant set, which consists of the equilibrium point if the latter exists.
     
     \textcolor{black}{For the response 
     maps of SASs, the counterpart  of the decay property is the  forgetting property. }
      We say that $H$ is \emph{uniformly forgetting (UF)}
      \textcolor{black}{if $\sup_{v \in \mathbb{U}^{*},u \in \mathbb{U}}\|H(uv)-H(v)\|_2 < +\infty$ and}
       \begin{equation}
       \label{def:uf}
       \lim_{t \rightarrow \infty} \sup_{\textcolor{black}{v \in \mathbb{U}^{+}, |v|=t}, w \in \mathbb{U}^{*}} \|H(wv)-H(v)\|_{\textcolor{black}{2}}=0\textcolor{black}{,}
       \end{equation}
       and we say that $H$ is \emph{uniformly exponentially forgetting (UEF)} with rate $\beta \in (0,1)$, if there exists $C_{f} > 0$, 
       \begin{equation}
       \label{def:uef}
       \forall \textcolor{black}{v \in \mathbb{U}^{*}}:  \sup_{w \in \mathbb{U}^{*}} \|H(wv)-H(v)\|_{\textcolor{black}{2}} \le C_f \beta^{|v|}\textcolor{black}{.} 
       \end{equation}
      \textcolor{black}{(Exponential)} forgetting  means that the influence of earlier inputs on the current output \textcolor{black}{(exponentially)} fades with time, and it 
      \textcolor{black}{relates to decaying properties of $\Psi_H$ as follows.}
  \begin{Lemma}
      \label{UD:UF}
      \textcolor{black}{If $\Psi_H$ is UED with rate $\beta$ if and only if $H$ is  UEF with rate $\beta$}. Moreover, if $H$ is UF, then $\Psi_H$ is UD.
      \end{Lemma}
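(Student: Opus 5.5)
The argument works directly from the telescoping identity for $H$ in terms of the family $\Psi_H$ from \eqref{def:hu}. For $w\in\mathbb{U}^{*}$ and $v\in\mathbb{U}^{*}$, write $w=w[1]\cdots w[k]$. Telescoping over the prefix gives
\[
H(wv)-H(v)=\sum_{s=1}^{k}\bigl(H(w[s]\cdots w[k]v)-H(w[s+1]\cdots w[k]v)\bigr)=\sum_{s=1}^{k} H_{w[s]}(w[s+1]\cdots w[k]v).
\]
Conversely, each $H_u(v)=H(uv)-H(v)$ is itself a single forgetting difference, with $w=u$. These two facts carry both directions of the equivalence and the final implication.

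\textbf{UEF $\Rightarrow$ UED.} Suppose $H$ is UEF with rate $\beta$ and constant $C_f$. Then $\|H_u(v)\|_2=\|H(uv)-H(v)\|_2\le C_f\beta^{|v|}$ for all $u\in\mathbb{U}$ and $v\in\mathbb{U}^{*}$. This is exactly \eqref{def:ued} for $\Psi_H$ with $C_{ed}=\max\{1,C_f\}$.

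\textbf{UED $\Rightarrow$ UEF.} Suppose $\Psi_H$ is UED with rate $\beta$ and constant $C_{ed}$. The term indexed by $s$ in the telescoping sum is evaluated at a word of length $k-s+|v|$. Hence
\[
\|H(wv)-H(v)\|_2\le C_{ed}\sum_{s=1}^{k}\beta^{k-s+|v|}\le \frac{C_{ed}}{1-\beta}\,\beta^{|v|}.
\]
The bound is uniform in $w$, so $H$ is UEF with $C_f=C_{ed}/(1-\beta)$. The geometric series is the only place the exponential rate is really used, since it makes the bound independent of $|w|$.

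\textbf{UF $\Rightarrow$ UD.} Suppose $H$ is UF. The boundedness part of UD follows because $\sup_{v,u}\|H_u(v)\|_2=\sup_{v,u}\|H(uv)-H(v)\|_2<+\infty$ is precisely the first condition in the definition of UF. For the decay condition \eqref{def:ud}, note that
\[
\sup_{|v|=t,\,u}\|H_u(v)\|_2\le \sup_{|v|=t,\,w\in\mathbb{U}^{*}}\|H(wv)-H(v)\|_2,
\]
obtained by taking $w=u$ a word of length one. The right-hand side tends to zero as $t\to\infty$ by \eqref{def:uf}.

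The only subtlety is the case $t=0$, where $v=\epsilon$. Here \eqref{def:uf} ranges over $v\in\mathbb{U}^{+}$, but this does not matter: \eqref{def:ud} is a limit, so only large $t$ are relevant, and $v=\epsilon$ is already covered by the uniform boundedness assumption. None of the steps is genuinely hard. The main point needing care is the indexing in the telescoping identity, which must be checked so that the word lengths, and hence the exponent $k-s+|v|$, come out correctly.
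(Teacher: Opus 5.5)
Your proof is correct and follows essentially the same route as the paper's: the telescoping bound giving $C_f=C_{ed}/(1-\beta)$, the one-letter specialization $w=u$ for the converse, and the same comparison of suprema (plus the boundedness condition of UF) for UF $\Rightarrow$ UD. Your choice $C_{ed}=\max\{1,C_f\}$ is a small bit of care the paper omits, since the definition of UED requires $C_{ed}\ge 1$.
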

    \begin{proof}
             If $\Psi_H$ is UED with rate $\beta$, then 
             \textcolor{black}{by using
             \( \|H(wv)-H(v)\|_2 \le  \textcolor{black}{\sum_{i=1}^{|w|}} \|H_{w[i]}(w[i+1] \cdots w[|w|]v)\|_2 \), 
             \textcolor{black}{$v \in \mathbb{U}^{*}$}, 
             and
              bounding the terms in the sum by $C_{ed}\beta^{|v|+|w|-i}$,}
             we get 
             \eqref{def:uef} \textcolor{black}{with $C_f=C_{ed}/(1-\beta)$. 
             Conversely, \eqref{def:uef} implies $\|H_{u}(v)\|_2=\|H(uv)-H(v)\|_2 \le C_f\beta^{|v|}$, $v \in \mathbb{U}^{*}$, i.e., $\Psi_H$ is UED with rate $\beta$.}
            %
            If $H$ is UF, then
            \textcolor{black}{from \eqref{def:hu} it follows that}
            $\sup_{v \in \mathbb{U}^{*}, |v|=t} \|H_u(v)\|_2 \le
            \sup_{\substack{v \in \textcolor{black}{\mathbb{U}^{+}}, |v|=t \\ w \in \mathbb{U}^{*}}}
                \|H(wv)-H(v)\|_2$
                \textcolor{black}{for $t \ge 1$. Hence,}
                \textcolor{black}{by} taking the limit $t \rightarrow \infty$ we get \eqref{def:ud}, \textcolor{black}{and 
                 $\sup_{\substack{u \in \mathbb{U} \\ v \in \mathbb{U}^{*}}} \|H_u(v)\|_2\!=\! \sup_{\substack{u \in \mathbb{U} \\ v \in \mathbb{U}^{*}}}\! \! \|H(uv)-H(v)\|_2<+\infty$.}
    \end{proof}
   From Theorem \ref{main:theo} we get thus  the following.
   \begin{Corollary}
   \label{col:sas}
      The response map 
        $H$ can be realized by a GUES SAS 
        if and only if $H$ is UF  and 
        $\Psi_H$ has a finite Hankel-rank.
        If $\Psi_H$ has a finite Hankel-rank, $H$ is UF if and only if $H$ is UEF.
	   If $H$ is UEF with \textcolor{red}{rate} $\beta$, then all minimal SAS realizations of $H$ are GUES with rate $\beta$. 
         \end{Corollary}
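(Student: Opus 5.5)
The plan is to transport Theorem \ref{main:theo} through the correspondence $\Sigma \mapsto R_{\Sigma}$, $R \mapsto \Sigma_R$ between SAS realizations of $H$ and SLS realizations of $\Psi_H$, and to use Lemma \ref{UD:UF} to translate decay of $\Psi_H$ into forgetting of $H$. Recall that, by definition, a SAS $\Sigma$ is GUES with rate $\beta$ iff $R_{\Sigma}$ is GUES with rate $\beta$.

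\textbf{First statement.} For the ``if'' direction, assume $H$ is UF and $\Psi_H$ has finite Hankel-rank. By Lemma \ref{UD:UF}, $\Psi_H$ is UD. Theorem \ref{main:theo}(A) (more precisely step (II) of its proof) then gives a minimal SLS realization $R$ of $\Psi_H$ that is GUES. The SAS $\Sigma_R$ realizes $H$, and since $x_0=0$ we have $R_{\Sigma_R}=(\{A_u\},\{A_u\cdot 0+B_u-0\},C)=R$. Hence $\Sigma_R$ is GUES.

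For the ``only if'' direction, let $\Sigma$ be a GUES SAS realizing $H$. Then $R_{\Sigma}$ is a GUES SLS realization of $\Psi_H$, so $\Psi_H$ has finite Hankel-rank by Theorem \ref{theo:real1}. By step (I) of the proof of Theorem \ref{main:theo}, $\Psi_H$ is UED with rate $\beta$; note that boundedness of $\{B_u\}$ is part of the GUES definition. Lemma \ref{UD:UF} then makes $H$ UEF with rate $\beta$. It remains to check that UEF implies UF. The bound \eqref{def:uef} gives $\sup_w\|H(wv)-H(v)\|_2\le C_f\beta^{|v|}\to 0$, which is \eqref{def:uf}. Taking $w=u$ gives $\|H(uv)-H(v)\|_2\le C_f$, which is the required uniform bound.

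\textbf{Second statement.} Assume $\Psi_H$ has finite Hankel-rank. If $H$ is UEF, then $H$ is UF by the argument just given. Conversely, if $H$ is UF, then $\Psi_H$ is UD by Lemma \ref{UD:UF}. Theorem \ref{main:theo}(C) then makes $\Psi_H$ UED with some rate $\beta$, and Lemma \ref{UD:UF} yields that $H$ is UEF with that rate.

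\textbf{Third statement.} This is the step that needs care: we must show that minimality is preserved by $\Sigma\mapsto R_\Sigma$. Let $\Sigma$ be a minimal SAS realization of $H$, so $\Sigma$ is affine-reachable and observable by fact (2). Then $R_{\Sigma}$ is span-reachable and observable, since the transformation maps affine-reachable SASs to span-reachable SLSs and preserves observability. By Theorem \ref{theo:real1}, $R_{\Sigma}$ is therefore a minimal SLS realization of $\Psi_H$. If $H$ is UEF with rate $\beta$, Lemma \ref{UD:UF} makes $\Psi_H$ UED with rate $\beta$, and Theorem \ref{main:theo}(B) makes $R_{\Sigma}$ GUES with rate $\beta$. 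This is exactly the statement that $\Sigma$ is GUES with rate $\beta$.

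One subtlety remains: GUES of $R_\Sigma$ also requires $\{B_u\}_{u\in\mathbb{U}}$ to be bounded. This is already part of the conclusion of Theorem \ref{main:theo}(B), via the bound on $\|B_j\|_2$ obtained at the end of step (II), so nothing additional needs to be checked.
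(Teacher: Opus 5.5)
Your proposal is correct and follows the paper's route. The paper obtains the corollary directly ("From Theorem \ref{main:theo} we get thus the following") by transporting Theorem \ref{main:theo} through the correspondence $\Sigma \mapsto R_{\Sigma}$, $R \mapsto \Sigma_R$ and translating decay into forgetting via Lemma \ref{UD:UF}. You spell out the details it leaves implicit: $R_{\Sigma_R}=R$, UEF $\Rightarrow$ UF, preservation of minimality, and boundedness of $\{B_u\}$ from step (II).
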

         \begin{color}{black}
       \begin{Remark}[Echo-state, fading memory]
        \label{rem:echo_fading}
      The UF property is related to the \emph{fading-memory (FM)} property for maps on infinite input sequences \cite{Ortega3, Ortega4}. If $H$ is UF and realizable by a SAS, then any minimal realization \eqref{eq1:sas} of $H$ is GUES; thus, by \cite[Theorem 1]{Ortega4}, it has the echo-state property and induces the infinite-sequence response map
      \( H_{ext}(\underline{u}) = Cb_{\underline{u}[0]} + \sum_{k=-1}^{-\infty} CA_{\underline{u}[0]\cdots \underline{u}[-k+1]}b_{\underline{u}[-k]} = 
      \lim_{t \to -\infty} H(\underline{u}[-t] \cdots \underline{u}[0]) \),
      which is well-defined and has the FM property. We conjecture that FM maps on infinite sequences are those realizable by GUES SASs, and that they induce UF maps on finite sequences.
      \end{Remark}  
    \end{color}
    Among SASs $\psi$-bounded SASs are of particular interest, as they include \textcolor{black}{such popular classes as} bilinear systems, and they are computationally effective.
  By \cite{Son:Real}, $H$ can be realized by a $\psi$-bounded SAS if and only if $H$ is bounded of type $\{\psi_{q}\}_{q=0}^{n_p}$ according to \textcolor{black}{\cite[Definition 1.15]{Son:Real}}, and the Hankel-rank of $\Psi_H$ equals the rank of a suitable Hankel matrix. \textcolor{black}{Minimization of  $\psi$-bounded SASs results in $\psi$-bounded SASs, which} are minimal SASs realizations. Thus:
  \begin{Corollary}
   The statements of Corollary \ref{col:sas} remain true if  SAS is replaced by $\psi$-bounded SAS and the finite Hankel-rank of $\Psi_H$ is replaced by the finite rank of the Hankel-matrix defined in \cite[Definition 2.7]{Son:Real}. 
  \end{Corollary}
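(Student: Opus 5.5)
The plan is to reduce the statement to Corollary \ref{col:sas}, using the realization theory of $\psi$-bounded SASs from \cite{Son:Real}, and then to read off each claim separately.

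\textbf{Step 1: realizability.} By \cite{Son:Real}, $H$ has a $\psi$-bounded SAS realization if and only if $H$ is bounded of type $\{\psi_q\}_{q=0}^{n_p}$ and the Hankel-matrix of \cite[Definition 2.7]{Son:Real} has finite rank. In that case this rank equals $\dim W_{\Psi_H}$. So finiteness of the Hankel-matrix rank implies finite Hankel-rank of $\Psi_H$, and the two ranks coincide.

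\textbf{Step 2: sufficiency.} Assume $H$ is UF and the Hankel-matrix has finite rank. I would apply the minimization procedure of \cite{Son:Real} to a $\psi$-bounded realization. This yields a $\psi$-bounded SAS that is a minimal SAS realization of $H$, hence affine-reachable and observable. By Corollary \ref{col:sas}, $H$ is UEF. By the last statement of Corollary \ref{col:sas}, every minimal SAS realization is GUES, and in particular this $\psi$-bounded one is.

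\textbf{Step 3: necessity and the remaining claims.} A GUES $\psi$-bounded SAS is in particular a GUES SAS. Corollary \ref{col:sas} then gives that $H$ is UF. The existence of a $\psi$-bounded realization gives finite Hankel-matrix rank by \cite{Son:Real}. The equivalence UF $\Leftrightarrow$ UEF under finite Hankel-matrix rank follows from Step 1 together with Corollary \ref{col:sas}.

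\textbf{Step 4: minimal realizations.} For the statement about minimal $\psi$-bounded realizations, I need that a minimal $\psi$-bounded SAS realization is also a minimal SAS realization. Minimization of a $\psi$-bounded SAS produces a $\psi$-bounded SAS of dimension $\dim W_{\Psi_H}$, which is the minimal SAS dimension. So the minimal $\psi$-bounded dimension equals the minimal SAS dimension, and every minimal $\psi$-bounded realization is a minimal SAS realization. Corollary \ref{col:sas} then makes it GUES with rate $\beta$ whenever $H$ is UEF with rate $\beta$.

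The main obstacle is the pair of facts quoted from \cite{Son:Real}: that the Hankel-matrix rank equals $\dim W_{\Psi_H}$, and that the minimization procedure stays within the $\psi$-bounded class. The rest is bookkeeping.
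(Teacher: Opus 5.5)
Your proposal is correct and follows the paper's argument. Both rely on two facts from \cite{Son:Real}: $\psi$-bounded realizability is characterized by finite rank of the Hankel matrix of Definition 2.7, which equals the Hankel-rank of $\Psi_H$, and minimizing a $\psi$-bounded SAS yields a $\psi$-bounded SAS that is a minimal SAS realization; everything else then follows from Corollary \ref{col:sas}. Your Step 4 spells out the dimension count that the paper leaves implicit, and, like the paper, you tacitly assume $H$ is bounded of type $\{\psi_q\}_{q=0}^{n_p}$ so that this Hankel matrix is defined.
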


\subsection{LPV-SSA/LSS} 
\textcolor{black}{We} apply Theorem \ref{main:theo} to LPV-SSA/LSSs.
Their 
response maps 
are of the form $H: (\mathbb{P} \times \mathbb{R}^m)^+ \to \mathbb{R}^{\textcolor{black}{n_y}}$.
Following \cite{Petreczky2016}, we  say that $H$ has an \textcolor{black}{\emph{impulse response representation (IRR)}} if there exists a map $S: \textcolor{black}{\bigcup_{k=2}^{\infty} Q^{k}} \to \mathbb{R}^{\textcolor{black}{n_y} \times m}$, 
where $Q=\{1,\ldots,n_p\}$, such that 
\textcolor{black}{for any $\underline{p} \in \mathbb{P}^{\infty}$, $\underline{v} \in (\mathbb{R}^m)^{\infty}$}, 
\begin{align*}
 & H((\underline{p}[1], \underline{v}[1])\cdots (\underline{p}[t], \underline{v}[t])) = \sum_{\tau=1}^{\textcolor{black}{t-1}} (h \diamond \underline{p})(t,\tau)\underline{v}[\tau], \quad \textcolor{black}{t \ge 1} \\
   & (h \diamond \underline{p})(t,\tau)=\sum_{s \in Q^{t-\tau+1}} S(s)\underline{p}_s(t), \quad \textcolor{black}{\tau \in [1,t-1]}
\end{align*}
\textcolor{black}{and $\underline{p}_s(t)=\prod_{i=1}^{|s|} \textcolor{black}{(\underline{p}[t-|s|+i])_{s[i]}}$\textcolor{black}{; recall that $(\underline{p}[k])_r$ is the $r$th component of $\underline{p}[k]$}.}
  We call $h \diamond \underline{p}$ the \textcolor{black}{\emph{impulse response} of $H$ (IRR)}  for the scheduling sequence $\underline{p}$\textcolor{black}{, 
  and we call the values of $S$ \emph{sub-Markov parameters.}}
  \textcolor{black}{By \cite{Petreczky2016}, 
  $\Sigma$ from \eqref{eq1:lpv} is a realization of $H$
  if and only if $H$ has an IRR and $S(q_1vq_2)\!=\!C_{q_2}A_vB_{q_1}$, 
  for all $q_1,q_2 \in Q$, $v \in Q^{*}$.}
  If $\mathbb{P}=\{e_1,\ldots,e_{n_p}\}$ (switched case) and \textcolor{black}{$q \in Q^{\infty}$ is such that}
  $\underline{p}[s]=e_{\underline{q}[s]}$, then
  $(h \diamond \underline{p})(t,\tau)=S(\underline{q}[\tau]\cdots \underline{q}[t])$, i.e., the sub-Markov parameters are the impulse responses.
  For general $\mathbb{P}\subseteq\Delta$, $(h \diamond \underline{p})(t,\tau)$ is a convex combination of these parameters.
  Following \cite{Petreczky2016} we define the \textcolor{black}{maps} $M_{q,\textcolor{black}{l}}: Q^* \to \mathbb{R}^{n_p \textcolor{black}{n_y}}$,
  $q \in Q$, $l=1,\ldots,m$ \textcolor{black}{as} 
   \[
   M_{q,l}(w) = \begin{bmatrix} (S_{\cdot,\textcolor{black}{l}}(qw1))^T, & \ldots, & (S_{\cdot,\textcolor{black}{l}}(qwn_p))^T \end{bmatrix}^T.
  \]
  \textcolor{black}{
  for all 
  $w \in Q^{*}$, 
  where $S_{\cdot,\textcolor{black}{l}}(s)$ is the \textcolor{black}{$l$}th column of $S(s)$,}
  and we consider the following family of response maps:
  \textcolor{black}{$\Psi_H = \{M_{q,l}\}_{j=(q,l) \in J}$ with $J=Q \times \{1,\ldots,m\}$.
   For a LPV-SSA/LSS $\Sigma$ from \eqref{eq1:lpv}, for any $(q,l) \in J$
   let $\tilde{B}_{(q,l)}$ be the $l$th column of $B_q$, and 
   associate with $\Sigma$ the SLS
  }
  \[ 
  \begin{split}
  &R_{\Sigma}=(\{A_q\}_{q \in Q},\textcolor{black}{\{\tilde{B}_{(q,l)}\}_{(q,l) \in J}},\begin{bmatrix} C_1^T, & \ldots & C_{n_p}^T \end{bmatrix}^T). \\
  \end{split}
  \]
  \textcolor{black}{Then} $\Sigma$ is a realization of $H$ if and only if \textcolor{black}{the SLS $R_{\Sigma}$ is a realization of $\Psi_H$} \cite{Petreczky2016}. The correspondence between $\Sigma$ and $R_{\Sigma}$ is a one-to-one \textcolor{black}{map between LPV-SSA/LSS realizations of $H$ and SLS realizations of $\Psi_H$, and it} preserves span-reachability, observability, minimality, and isomorphism \cite{Petreczky2016}.
  \textcolor{black}{Hence,} $H$ has a realization by an LPV-SSA/LSS if and only if $H$ is IRR and $\Psi_H$ has a  finite Hankel-rank, $\Sigma$ is a minimal realization of $H$ if and only if it is span-reachable, observable, and any two minimal LPV-SSA/LSS realizations of $H$ are isomorphic \cite{Petreczky2016}. 

  The LPV-SSA/LSS  \eqref{eq1:lpv} is called \emph{globally uniformly exponentially stable (GUES)} if there exist constants \textcolor{red}{$C \ge 1$}, $\beta \in (0,1)$ such that 
  \textcolor{black}
  {\(
  \|A(\underline{p}[k]) \cdots A(\underline{p}[1])\|_2 < C \beta^k
  \),
  for any  $\underline{p} \in \mathbb{P}^k$, $k >0$, 
  This is equivalent to \eqref{eq1:lpv} 
  being GUES at the zero equilibrium point for $v=0$, 
  and can be characterized by}
  the joint spectral radius of $\{A(p)\}_{p \in \mathbb{P}}$ being smaller than $1$. \textcolor{black}{As} $\{A_i\}_{i=1}^{n_p} \subseteq \{A(p)\}_{p \in \mathbb{P}} \subseteq Conv(\{A_i\}_{i=1}^{n_p})$, \textcolor{black}{the latter} is equivalent to SLS $R_{\Sigma}$ being GUES \cite{RJungers}.
  %

  The IRR of $H$ is called
  \emph{uniformly decaying (IRR-UD)}, if 
   \begin{align*}
   & \lim_{N \rightarrow \infty} \sup_{\underline{p} \in \mathbb{P}^{\infty}, \tau \in \mathbb{N},\tau \ge 1}\|(h \diamond \underline{p})(N+\tau,\tau)\|_2=0, 
   \end{align*}
  and \emph{uniformly exponentially decaying (IRR-UED) with rate $\beta \in (0,1)$} if 
  for some $C > 0$ \textcolor{black}{, for all $N \ge 1$,} 
  \begin{align*}
   \sup_{\underline{p} \in \mathbb{P}^{\infty}, \tau \in \mathbb{N}, \tau \ge 1} \|(h \diamond \underline{p})(N+\tau,\tau)\|_2 < C\beta^{\textcolor{black}{N-1}}.
   \end{align*}
  
  Intuitively, IRR-UD (IRR-UED) means that the impulse response $h \diamond \underline{p}$ decays (exponentially) 
  to zero, and it is equivalent to the sub-Markov parameters decaying (exponentially).
  \begin{Lemma}
  \label{lem:lpv:IRR-UD}
  $H$ is IRR-UD (resp. IRR-UED \textcolor{black}{with rate $\beta$}) if and only if $\Psi_H$ is UD 
  (resp. UED \textcolor{black}{with rate $\beta$}). 
  \end{Lemma}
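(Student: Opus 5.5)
The plan is to reduce both directions to a comparison between the sub-Markov parameters $S(s)$, $|s| = N+1$, and the values $M_{q,l}(w)$, $|w| = N-1$. We first relate lengths. By definition, the entries of $M_{q,l}(w)$ are the columns $S_{\cdot,l}(qwr)$ for $r \in Q$, and $|qwr| = |w|+2$. Conversely, every word $s \in Q^{k}$ with $k \ge 2$ can be written uniquely as $s = q w r$ with $|w| = k-2$. Hence
\[
\sup_{|w|=t,\,(q,l)\in J}\|M_{q,l}(w)\|_2 \quad \text{and} \quad \sup_{s\in Q^{t+2}}\|S(s)\|_2
\]
bound each other up to constants depending only on $m$ and $n_p$. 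This is just equivalence of norms: a stacked vector versus its blocks, and a matrix versus its columns. Similarly, $\sup_{w,j}\|M_{q,l}(w)\|_2<\infty$ is equivalent to $\sup_s\|S(s)\|_2<\infty$. So $\Psi_H$ is UD (resp. UED with rate $\beta$) if and only if $\sup_{|s|=N+1}\|S(s)\|_2$ is bounded and tends to $0$ (resp. is at most $C\beta^{N-1}$).

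It then remains to compare $\sup_{|s|=N+1}\|S(s)\|_2$ with
\[
\sup_{\underline{p},\tau}\|(h\diamond\underline{p})(N+\tau,\tau)\|_2 .
\]
We have $(h\diamond \underline{p})(N+\tau,\tau)=\sum_{s\in Q^{N+1}}S(s)\underline{p}_s(N+\tau)$, and the weights $\underline{p}_s(t)$ are products of components of points in the simplex. Since $\sum_{s\in Q^{N+1}}\underline{p}_s(t)=\prod_{i}\sum_r(\underline{p}[\cdot])_r=1$ and all weights are nonnegative, the impulse response is a convex combination of the $S(s)$. Therefore
\[
\|(h\diamond\underline{p})(N+\tau,\tau)\|_2\le \max_{s\in Q^{N+1}}\|S(s)\|_2 .
\]
For the reverse inequality we use that $\mathbb{P}$ contains $e_1,\ldots,e_{n_p}$. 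Given $s\in Q^{N+1}$, choose $\tau=1$ and $\underline{p}[i]=e_{s[i]}$ for $i=1,\ldots,N+1$, with arbitrary values afterwards. Then $(h\diamond\underline{p})(N+1,1)=S(s)$, exactly as in the switched case. Hence the two suprema coincide, for every $N\ge1$.

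Combining the two steps gives the claims. For IRR-UD $\Leftrightarrow$ UD, the limits as $N\to\infty$ correspond, since $t=|w|=N-1$. For IRR-UED with rate $\beta$ $\Leftrightarrow$ UED with rate $\beta$, the bound $C\beta^{N-1}$ on words $s$ with $|s|=N+1$ is exactly a bound $C'\beta^{|w|}$ for $M_{q,l}(w)$ with $|w|=N-1$. The exponent alignment is why the paper writes $\beta^{N-1}$, and the constants only change by the norm-equivalence factors.

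The main obstacle is the boundedness part of UD. The definition of UD for $\Psi_H$ requires $\sup_w\|M_{q,l}(w)\|<\infty$ over all lengths, while IRR-UD only controls the limit. The plan is to get boundedness from IRR-UD as follows: for large $N$, the supremum over $|s|=N+1$ is below $1$. For the finitely many small lengths, $Q^{k}$ is finite, so the maximum over these words is finite as well. This uses that $Q$ is finite. Some care with indexing ($N\ge1$ versus $|w|\ge0$) is also needed.
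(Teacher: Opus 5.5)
Your proposal is correct and follows the paper's proof essentially step by step. Like the paper, you establish $\sup_{\underline{p}}\|(h\diamond\underline{p})(N+\tau,\tau)\|_2=\sup_{s\in Q^{N+1}}\|S(s)\|_2$ by combining unit-vector scheduling (for $\ge$) with the convex-combination bound (for $\le$), and then compare $M_{q,l}(w)$ with $S(qw\sigma)$ via norm equivalence. Your explicit argument for the boundedness requirement in UD, which uses the finiteness of $Q$, fills in a detail the paper leaves implicit.
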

  \begin{proof}[Proof of Lemma \ref{lem:lpv:IRR-UD}]
   We show that 
      \begin{equation}
      \label{eq:pf:lpv1}
      \begin{split}
          \sup_{\underline{p} \in \mathbb{P}^{\infty}}\!\! 
          \|(h \diamond \underline{p})(N+\tau,\tau)\|_2
          \!\! = \!\! 
          \sup_{s \in Q^{N+1}}  \|S(s)\|_2 
      \end{split}
      \end{equation}
   for any \textcolor{black}{$N \ge 1,\tau \in \mathbb{N}$}.  
      The statement of the lemma follows from \eqref{eq:pf:lpv1}
      \textcolor{black}{by noticing that 
       $\frac{1}{\sqrt{m}} \|S(qwq_1)\|_2  \le \sup_{l=1,\ldots,m} \|M_{q,l}(w)\|_2 \le \sqrt{n_p} \sup_{\sigma  \in Q} \|S(q w\sigma)\|_2$, $q,q_1 \in Q,w \in Q^{*}$.}
        To show \eqref{eq:pf:lpv1}, for any choice of $\underline{q} \in Q^{\infty}$, and any $N$, choose $\underline{p} \in \mathbb{P}^{\infty}$ such that $\underline{p}[i]=e_{\underline{q}[i]}$ \textcolor{black}{for all $i \ge 1$}. Then 
        $(h \diamond \underline{p})(N+\tau,\tau)=S(\textcolor{black}{\underline{q}[\tau] \cdots \underline{q}[N+\tau]})$
        and hence the left-hand side of \eqref{eq:pf:lpv1} is greater than or equal to the right-hand side.
        Conversely, for any choice of $\underline{p} \in \mathbb{P}^{\infty}$
	  by using the fact that $\mathbb{P} \subseteq \Delta$\textcolor{red}{,}  for any $N$, 
        \[ 
          \begin{split}
             & \|(h \diamond \underline{p})(N+\tau,\tau)\|_2  \le \sum_{s \in Q^{N+1}} \|S(s)\|_2 \underline{p}_s(N+\tau) \\
            & \le \sup_{s \in Q^{N+1}} \|S(s)\|_2 \sum_{s \in Q^{N+1}} \underline{p}_s(N+\tau) =
            \sup_{s \in Q^{N+1}} \|S(s)\|_2 \textcolor{black}{,}
          \end{split}
        \]
        where we used that  $\sum_{s \in Q^l} \underline{p}_s(t) = 1$ for any $t \ge l$.
  \end{proof}
  Theorem \ref{main:theo} applied to $\Psi_H$ and SLSs associated with LPV-SSAs results in the following.
      \begin{Corollary}
        \label{cor:lpv}
        Let $H$ be a response map \textcolor{black}{with} an IRR. 
        \\
        \textbf{(A)}
         If $\Psi_H$ has a finite Hankel-rank, then
         \textcolor{black}{IRR-UD and IRR-UED are equivalent.}
         \\
        \textbf{(B)}
         $H$ has a realization by a GUES LPV-SSA/LSS if and only  if 
         $\Psi_H$ has a finite Hankel-rank, and 
         \textcolor{black}{IRR-UD holds.}
         \\
        \textbf{(C)}
         $H$ is IRR-UED with the rate $\beta$ if and only if all minimal 
         LPV-SSA/LSS realizations of $H$ are GUES with the rate $\beta$.
      \end{Corollary}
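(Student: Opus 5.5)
The plan is to transfer everything to the associated SLS $R_{\Sigma}$ and the family $\Psi_H$, and then use Lemma \ref{lem:lpv:IRR-UD} together with Theorem \ref{main:theo}. The dictionary is as follows. $\Sigma$ realizes $H$ iff $R_{\Sigma}$ realizes $\Psi_H$, and $\Sigma\mapsto R_{\Sigma}$ is a bijection preserving minimality \cite{Petreczky2016}. $\Sigma$ is GUES with rate $\beta$ iff $R_{\Sigma}$ is GUES with rate $\beta$. IRR-UD (resp. IRR-UED with rate $\beta$) is equivalent to $\Psi_H$ being UD (resp. UED with rate $\beta$).

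The GUES equivalence needs care with rates. If $R_{\Sigma}$ satisfies $\|A_w\|_2\le C_g\beta^{|w|}$ for $w\in Q^*$, then $A(\underline{p}[k])\cdots A(\underline{p}[1])=\sum_{s\in Q^k}\underline{p}_s(k)A_s$ with convex weights, since $\mathbb{P}\subseteq\Delta$. The same computation as in the proof of Lemma \ref{lem:lpv:IRR-UD} then gives the LPV bound with the same $\beta$. Conversely, $e_i\in\mathbb{P}$ gives $A(e_i)=A_i$, so the LPV bound restricts to the SLS bound. The initial states $\tilde B_{(q,l)}$ form a finite family, so boundedness is automatic.

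For the rate, I also need to reconcile the UED boundedness constants. IRR-UED yields $\sup\|S(s)\|_2<C\beta^{|s|-2}$ for $|s|=N+1$. Since $M_{q,l}(w)$ collects the $l$th columns of $S(qw\sigma)$, which have length $|w|+2$, the bound from Lemma \ref{lem:lpv:IRR-UD} gives $\|M_{q,l}(w)\|_2\le\sqrt{n_p}C\beta^{|w|}$. This is UED with rate $\beta$, and the converse direction only changes constants. I will also check the UD boundedness requirement $\sup\|M_{q,l}(w)\|<\infty$. This follows from the limit condition plus finitely many short words, since $Q$ is finite.

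With the dictionary in place, the three parts go as follows.

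For \textbf{(A)}, assume $\Psi_H$ has finite Hankel-rank. IRR-UD means $\Psi_H$ is UD, so by Theorem \ref{main:theo}(C) $\Psi_H$ is UED with some rate $\beta$, which means $H$ is IRR-UED with rate $\beta$. The converse is immediate.

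For \textbf{(B)}, suppose $H$ has a GUES LPV-SSA/LSS realization $\Sigma$. Then $R_{\Sigma}$ is a GUES SLS realization of $\Psi_H$. By Theorem \ref{main:theo}(A), $\Psi_H$ has finite Hankel-rank and is UD, i.e.\ $H$ is IRR-UD. Conversely, finite Hankel-rank gives a minimal SLS realization of $\Psi_H$, and UD makes it GUES by Theorem \ref{main:theo}(A) (step II). It has the form $R_{\Sigma}$ for a minimal LPV-SSA/LSS $\Sigma$ realizing $H$, which is then GUES.

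For \textbf{(C)}, the direction "$\Rightarrow$" is Theorem \ref{main:theo}(B) applied to $\Psi_H$, transported back to $\Sigma$. For "$\Leftarrow$", take a minimal GUES realization with rate $\beta$. Implication (I) in the proof of Theorem \ref{main:theo} gives $\Psi_H$ UED with rate $\beta$, hence $H$ is IRR-UED with rate $\beta$. This presumes a minimal realization exists; if "all" is vacuous, I would restrict to the case where $\Psi_H$ has finite Hankel-rank, which holds whenever a realization exists.

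The main obstacle is the exact bookkeeping of the rate $\beta$ in the IRR and GUES translations, namely the shift by $N-1$ and word length $|w|+2$. Here I will verify that the constants absorb the offsets, e.g.\ through factors like $\beta^{-2}$, without changing $\beta$.
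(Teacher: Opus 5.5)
Your proposal is correct and takes essentially the paper's route: the paper obtains the corollary by applying Theorem \ref{main:theo} to $\Psi_H$, using the correspondence $\Sigma \mapsto R_{\Sigma}$ and Lemma \ref{lem:lpv:IRR-UD}, exactly as you do. Your extra bookkeeping fills in details the paper leaves implicit: the convex-combination argument that $\Sigma$ and $R_{\Sigma}$ are GUES with the same rate $\beta$, the boundedness part of UD, and the remark that the ``$\Leftarrow$'' direction of (C) needs a minimal realization to exist, i.e., finite Hankel-rank.
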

      That is, the existence of a GUES LPV-SSA/LSS realization is equivalent to the existence of an IRR which is decaying, necessarily exponentially, 
      and the decay rate for IRR is the same as that of any minimal realization.

      \textcolor{black}{Next, 
      we show that BIBO stability together with $\ell_{\infty}$-forgetting is equivalent to the existence of a GUES realization.}
  We say that $H$ has the \emph{$\ell_{\infty}$-forgetting property} if there exists a decreasing sequence $\{G_{H,k}\}_{k=0}^{\infty}$ 
  \textcolor{black}{of real numbers}
  \textcolor{black}{such that 
  \textbf{(1)} $\lim_{k \rightarrow \infty} G_{H,k}=0$ and
  \textbf{(2)} for any
  scheduling signal $\underline{p} \in (\mathbb{P})^{t}$, $t \ge 1$ and 
  any
  two input signals $\underline{v}_1,\underline{v}_2  \in (\mathbb{R}^{m})^{t}$
  $i=1,2$ 
  such that $\underline{v}_1$ and $\underline{v}_2$} are the same for the last \textcolor{black}{$0 \le k \le t$} time steps, i.e., $\underline{v}_1(s)=\underline{v}_2(s)$ for $s=t-k+1,\ldots t$, \textcolor{black}{ the following holds with 
  $w_i=((\underline{p}[1],\underline{v}_i[1]) \cdots (\underline{p}[t],\underline{v}_i[t]))$, 
  $i=1,2$:}
  \begin{align}
  & \|H(w_1)\|_2 \le G_{H,0} \|\textcolor{black}{\underline{v}_1}\|_{\infty} \label{eq:l_infty_gain1} \textcolor{black}{,} \\
  & \|H(w_1) - H(w_2)\|_2 \le G_{H,k} \|\textcolor{black}{\underline{v}_1}-\textcolor{black}{\underline{v}_2}\|_{\infty}
  \label{eq:l_infty_gain2} \textcolor{black}{.}
  \end{align}
  We say that $H$ is \emph{exponentially  $\ell_{\infty}$-forgetting} \textcolor{black}{with rate $\beta \in (0,1)$}, if, \textcolor{black}{in addition to \eqref{eq:l_infty_gain1}-\eqref{eq:l_infty_gain2},} $G_{H,k}<\textcolor{black}{K}\beta^k$ \textcolor{black}{holds} for some $\textcolor{black}{K} > 0$ and all $k \in \mathbb{N}$. 
    \textcolor{black}{
    Intuitively, \eqref{eq:l_infty_gain1} ensures BIBO stability by giving a uniform $\ell_\infty$-gain bound, while \eqref{eq:l_infty_gain2} ensures that the effect of inputs more than $k$ steps in the past decays uniformly. Note that \eqref{eq:l_infty_gain1} alone implies absolute summability and forgetting for each fixed scheduling, but not uniformly across all scheduling signals \cite[Chapter~7d, Thm.~17]{Cal:Des}.}

  \begin{Theorem}
  If $H$ is IRR and has a finite Hankel-rank, then the following are equivalent: \textbf{(1)} $H$ has the $\ell_{\infty}$-forgetting property, \textbf{(2)} $H$ is exponentially $\ell_{\infty}$-forgetting, \textbf{(3)} $H$ is IRR-UED, \textbf{(4)} $H$ has a realization by a GUES LPV-SSA/LSS. 
  \textcolor{black}{Moreover, }if $H$ \textcolor{black}{is} $\ell_{\infty}$-forgetting, then 
  it \textcolor{black}{is} exponentially $\ell_{\infty}$-forgetting with a rate $\beta$,
  and 
  all minimal LPV-SSA/LSS realizations of $H$ are GUES with \textcolor{black}{the rate $\beta$}. 
  \end{Theorem}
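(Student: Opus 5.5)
We prove the implications $(2)\Rightarrow(1)\Rightarrow(3)\Rightarrow(4)\Rightarrow(2)$. The rate statement comes out of the same chain by tracking $\beta$. Implication $(2)\Rightarrow(1)$ is trivial: the sequence $G_{H,k}$ can be replaced by $\min\{G_{H,k},K\beta^k\}$, which is still decreasing. Implications $(3)\Rightarrow(4)$ and the rate claims follow from Corollary \ref{cor:lpv}. Indeed, IRR-UED implies IRR-UD, so by Corollary \ref{cor:lpv}(B), using finite Hankel-rank of $\Psi_H$, there is a GUES realization. By Corollary \ref{cor:lpv}(C), IRR-UED with rate $\beta$ makes every minimal realization GUES with rate $\beta$. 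The real work is therefore in $(1)\Rightarrow(3)$ and $(4)\Rightarrow(2)$.

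\textbf{(1)$\Rightarrow$(3).} We use the IRR to extract the sub-Markov parameters from \eqref{eq:l_infty_gain2}. Fix $\underline{p}$, a time $t$ and a past time $\tau\le t-1$, with $N=t-\tau$. Take $\underline{v}_1$ equal to zero except $\underline{v}_1[\tau]=e_l$ (a unit vector), and take $\underline{v}_2=0$. The two inputs agree on the last $k=N$ steps, so \eqref{eq:l_infty_gain2} gives $\|(h\diamond\underline{p})(t,\tau)e_l\|_2\le G_{H,N}$. Taking all $l$ gives $\|(h\diamond\underline{p})(N+\tau,\tau)\|_2\le\sqrt{m}\,G_{H,N}\to0$ uniformly in $\underline{p}$ and $\tau$. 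The IRR is defined for infinite $\underline{p}$, but $H$ only depends on the finite prefix, so this restriction causes no trouble. This is IRR-UD, and by Lemma \ref{lem:lpv:IRR-UD} $\Psi_H$ is UD. Since the Hankel-rank is finite, Corollary \ref{cor:lpv}(A) (equivalently Theorem \ref{main:theo}(C)) upgrades this to IRR-UED with some rate $\beta$. Condition \eqref{eq:l_infty_gain1} is not even needed for this direction.

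\textbf{(4)$\Rightarrow$(2).} This is the main step. Let $\Sigma$ be a GUES realization. If the rate needs to be the one from IRR-UED, we take a minimal realization, which is GUES with that rate by the argument above. By \cite{Petreczky2016}, $(h\diamond\underline{p})(t,\tau)=C(\underline{p}[t])A(\underline{p}[t-1])\cdots A(\underline{p}[\tau+1])B(\underline{p}[\tau])$. One should double-check this index convention against the IRR formula. Since $\mathbb{P}\subseteq\Delta$, both $\|C(p)\|_2$ and $\|B(p)\|_2$ are uniformly bounded, by $c$ and $b$ say. GUES then gives $\|(h\diamond\underline{p})(t,\tau)\|_2\le c\,b\,C_g\beta^{t-\tau-1}$.

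It remains to sum the impulse response. The output is
\[ H(w_1)-H(w_2)=\sum_{\tau=1}^{t-k}(h\diamond\underline{p})(t,\tau)(\underline{v}_1[\tau]-\underline{v}_2[\tau]). \]
Bounding $\|\cdot\|_2$ of each input term by $\sqrt{m}\|\underline{v}_1-\underline{v}_2\|_\infty$, we get
\[ \|H(w_1)-H(w_2)\|_2\le \sqrt{m}\,c\,b\,C_g\sum_{j\ge k}\beta^{j}\,\|\underline{v}_1-\underline{v}_2\|_\infty, \]
after also accounting for the $k=0$ versus $k\ge1$ index shift. So we may take $G_{H,k}=\sqrt{m}\,c\,b\,C_g\beta^{k}/(1-\beta)$ up to one factor of $\beta$, which is decreasing and exponentially small. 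Setting $\underline{v}_2=0$ and $k=0$ gives \eqref{eq:l_infty_gain1}.

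Finally, for the ``moreover'' part, start from $\ell_\infty$-forgetting. Step (1)$\Rightarrow$(3) gives IRR-UED with some $\beta$. Corollary \ref{cor:lpv}(C) makes all minimal realizations GUES with that rate $\beta$. Step (4)$\Rightarrow$(2), applied to such a minimal realization, gives exponential $\ell_\infty$-forgetting with the same $\beta$. The only delicate points I expect are this index and rate bookkeeping, in particular the shift between $\beta^{N-1}$ in IRR-UED and $\beta^k$ in forgetting, which only changes constants.
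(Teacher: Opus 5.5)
Your proposal is correct and follows essentially the paper's route. Like the paper, you test \eqref{eq:l_infty_gain2} with special inputs to bound the impulse response, hand the UD-to-UED upgrade and the realization and rate claims to Corollary \ref{cor:lpv}, and get exponential forgetting by summing a geometric tail of $\|(h\diamond\underline{p})(t,\tau)\|_2$. The paper uses sign-pattern inputs to bound the whole sum $\sum_\tau\|(h\diamond\underline{p})(t+k,\tau)\|_2$, while your unit impulses bound each term, which already suffices for IRR-UD. In (4)$\Rightarrow$(2) you go through a GUES realization, and your index convention for it is correct, whereas the paper sums the IRR-UED bound directly; the detour is harmless but not needed.
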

  \begin{proof}
    The \textcolor{black}{theorem follows} from Corollary \ref{cor:lpv} and the following implications:

    \textbf{$\ell_{\infty}$-forgetting property $\implies$ $H$ is IRR-UD.}
      For \textcolor{black}{a} $\underline{p} \in \mathbb{P}^{\infty}$, \textcolor{black}{$k \ge 1$},
      define the response map
        \( H_{\underline{p},k}:(\mathbb{R}^m)^{+} \to \mathbb{R}^{\textcolor{black}{n_y}}\)
        as follows: for any $\underline{v} \in (\mathbb{R}^m)^{t}$,
        $t \ge 1$,
          let $w \in (\mathbb{P} \times \mathbb{R}^m)^{t+k}$ be such that
          $w[\tau]=(\underline{p}[\tau],\underline{v}[\tau])$ for $\tau=1,\ldots, t$, and $w[\tau]=(\underline{p}[\tau],0)$ for $\tau=t+1,\ldots, t+k$. Then we set
          $H_{\underline{p},k}(\underline{v}[1] \cdots \underline{v}[t]) = H(w)$. 
          It follows that 
          \textcolor{black}{for any sequences $\underline{v}_1,\underline{v}_2 \in (\mathbb{R}^m)^{t+k}$ such that 
          $\underline{v}_1[s]-\underline{v}_2[s]=\left\{\begin{array}{rl} \underline{v}[s] & s \le t \\ 0 & s > t \end{array}\right.$}
          \begin{equation}
            \label{eq:F_p_k}
             H_{\underline{p},k}(\underline{v})
            \!\!=\!\! \sum_{\tau=1}^{t} (h \diamond \underline{p})(t+k,\tau)\underline{v}[\tau]=H(w_1) \!\!-\!\! H(w_2) \textcolor{black}{,}
            \end{equation}
          \textcolor{black}{where $w_i=(\underline{p}[1],\underline{v}_i[1]) \cdots (\underline{p}[t+k],\underline{v}_i[t+k])$,}
          $i=1,2$.

          \begin{color}{black}
          For any output index $r=1,\ldots,n_y$ let $\underline{v}^{\textcolor{black}{r}} \in (\mathbb{R}^m)^t$ be such that 
          $\textcolor{black}{(\underline{v}^r[\tau])_j}=\left\{\begin{array}{rl} 1 & ((h \diamond \underline{p})(t+k,\tau))_{r,j}  > 0\\ -1 & \mbox{otherwise} \end{array}\right. $ is the sign of the $(r,j)$th entry of $(h \diamond \underline{p})(t+k,\tau)$ for
          $j=1,\ldots,m$.
          Then, 
         \( (H_{\underline{p},k}(\underline{v}^{\textcolor{black}{r}}))_r=\sum_{\tau=1}^{t} \sum_{j=1}^{m} |\left((h \diamond \underline{p})(t+k,\tau)\right)_{r,j}| \).  
           Hence 
          as $H$ is  $\ell_{\infty}$-forgetting, 
          \textcolor{black}{by setting $\underline{v}_2=0$ and $\underline{v}_1[s]=\underline{v}^{\textcolor{black}{r}}[s]$ for $s=1,\ldots,t$, it follows that} 
          $\textcolor{black}{(H_{\underline{p},k}(\underline{v}^{\textcolor{black}{r}}))_r \le} \|H_{\underline{p},k}(\underline{v}^{\textcolor{black}{r}})\|_{2}=\|H(w_1)-H(w_2)\|_2 \le G_{H,k}\|\underline{v}^{\textcolor{black}{r}}\|_{\infty} \textcolor{black}{\le G_{H,k}}$, and thus
          \begin{align*}
           & \sum_{\tau=1}^{t} \|(h \diamond \underline{p})(t+k,\tau)\|_2 \le  
           \sum_{\tau=1}^{t} \|(h \diamond \underline{p})(t+k,\tau)\|_F 
           \\
           & \le \sum_{\tau=1}^{t} \sum_{r=1}^{\textcolor{black}{n_y}} \sum_{j=1}^{m} |\left((h \diamond \underline{p})(t+k,\tau)\right)_{r,j}| \le 
         n_y G_{H,k} \textcolor{black}{.}
          \end{align*}
         As $\lim_{k \rightarrow \infty} G_{H,k}=0$,
         it then follows that $(h \diamond \underline{p})$ is IRR-UD. 
        \end{color}
    \textbf{IRR-UED with rate $\beta$ $\implies$ $H$ exponential $\ell_{\infty}$-forgetting with rate $\beta$.}
         If $H$ is IRR-UED, then set 
         \[
		 G_{H,k}=\textcolor{red}{\sqrt{m}}\sup_{t \ge 1, \underline{p} \in \mathbb{P}^{\infty}} \sum_{\tau=1}^{\textcolor{black}{\min\{t,t+k-1\}}} \|(h \diamond \underline{p})(t+k,\tau)\|_2
         \]
         for all $k \ge 0$.
         \textcolor{black}{Since $H$ is IRR-UED}, it follows that 
         $\|(h \diamond \underline{p})(t+k,\tau)\|_2 \le C \beta^{t+k-\tau\textcolor{black}{-1}}$ for some $C > 0$ and $\beta \in (0,1)$, and hence $G_{H,k} \le K\beta^k$ for 
	 $K=\textcolor{red}{\sqrt{m}}C/\textcolor{black}{(\beta(1-\beta))}$.
         Recall the definition of $H_{\underline{p},k}$ from the previous implication.
         \textcolor{black}{By repeating the argument of \cite[Chapter 7d, Theorem 17]{Cal:Des},}
         it follows that 
         \textcolor{black}{$G_{H,0}$ satisfies \eqref{eq:l_infty_gain1}, and}
         the $\ell_{\infty}$-gain of $H_{\underline{p},k}$ is upper bounded by $G_{H,k}$, \textcolor{black}{$k \ge 1$,} \textcolor{black}{i.e., $\|H_{\underline{p},k}(\underline{v})\|_2 \le G_{H,k} \|\underline{v}\|_{\infty}$.}
         Then \eqref{eq:F_p_k} implies that $G_{H,k}$ satisfies \eqref{eq:l_infty_gain2},
         for $k \ge 1$. 
  \end{proof}

\section{Conclusions}
We characterized when an input--output family admits a stable finite-dimensional realization (SLS, SAS, LPV-SSA, LSS): for finite Hankel-rank, stability is equivalent to uniform decay/forgetting, and minimal realizations inherit the same decay rate.
A future direction is to develop the relationship with echo-state/fading-memory properties and input--output tests for existence of quadratic/polyhedral/SOS Lyapunov functions.

\textbf{Acknowledgements:} we thank the anonymous reviewers for numerous useful technical remarks and suggestions. 

 \bibliographystyle{IEEEtran}
\bibliography{petreczky_chapterV2}
\end{document}